\documentclass[12pt]{article}
\usepackage{amssymb,amsmath}
\usepackage{latexsym,bm}

\setlength{\textheight}{22cm} \setlength{\textwidth}{15cm}
\oddsidemargin 0 cm \evensidemargin 0 cm

\topmargin -50pt
\parskip 6pt
\usepackage{amsfonts}
\usepackage{latexsym,amsmath,amssymb,amsfonts,epsfig,psfrag,url,graphics,ifpdf,multicol}
\usepackage{eepic,color,colordvi,amscd,amsthm}
\usepackage[section]{algorithm}
\usepackage{algpseudocode}
\usepackage[numbers,sort&compress]{natbib}
\usepackage{indentfirst,graphics,epsfig}
\usepackage{graphicx}
\usepackage{graphics}
\usepackage{tikz}
\usepackage{booktabs}
\usepackage{subfigure}

\newtheorem{theo}{Theorem}
\newtheorem{lem}[theo]{Lemma}

\setcounter{section}{0} \theoremstyle{definition}

\theoremstyle{remark}

\newcounter{casenum}[theo]

\newcounter{subcasenum}[theo]

\newcounter{claimnum}[theo]

\setlength{\textwidth}{165mm} \setlength{\textheight}{230mm}
\setlength{\headheight}{3cm} \setlength{\topmargin}{0pt}
\setlength{\headsep}{0pt} \setlength{\oddsidemargin}{0pt}
\setlength{\evensidemargin}{0pt}
\allowdisplaybreaks
\parskip=8pt

\voffset -25mm \rm

\pagestyle{plain}

\usepackage{indentfirst,subfig}

\begin{document}
\thispagestyle{plain}

\begin{center} {\Large Estimating the circumference of a graph in terms of its leaf number
}
\end{center}
\pagestyle{plain}
\begin{center}
{
  {\small  Jingru Yan \footnote{ E-mail address: mathyjr@163.com}}\\[3mm]
  {\small  Department of Mathematics, East China Normal University, Shanghai 200241,  China }\\

}
\end{center}

\begin{center}

\begin{minipage}{140mm}
\begin{center}
{\bf Abstract}
\end{center}
{\small   Let $\mathcal{T}$ be the set of spanning trees of $G$ and let $L(T)$ be the number of leaves in a tree $T$. The leaf number $L(G)$ of $G$ is defined as $L(G)=\max\{L(T)|T\in \mathcal{T}\}$. Let $G$ be a connected graph of order $n$ and minimum degree $\delta$ such that $L(G)\leq 2\delta-1$. We show that the circumference of $G$ is at least $n-1$, and that if $G$ is regular then $G$ is hamiltonian.

{\bf Keywords.} Leaf number, circumference, hamiltonian}

{\bf Mathematics Subject Classification.} 05C38, 05C45
\end{minipage}
\end{center}

\section{Introduction}
We will deal with only finite nontrivial simple graphs. Let $G$ be a graph with vertex set $V(G)$ and edge set $E(G)$. The order and size of a graph $G$ are its number of vertices and edges, respectively. The notations $N_G(v)$ and $N_G[v]$ denote the neighborhood and closed neighborhood of $v\in V(G)$, respectively. The degree of $v$ is $d_G(v)=|N_G(v)|$. $\delta(G)$ and $\Delta(G)$ denote the minimum and maximum degree of a graph $G$, respectively. If the graph $G$ is clear from the context, we will omit it as subscript. For terminology and notations not explicitly described in this paper, the reader is referred the books \cite{BM,W}.

Let $\mathcal{T}$ be the set of spanning trees of $G$. $L(T)$ denotes the number of leaves in a tree $T$, where a leaf means a vertex of degree 1. Then the leaf number $L(G)=\max\{L(T)|T\in \mathcal{T}\}$. Many researchers have estimated the circumference of graphs by various invariants. The purpose of this paper is to estimate the circumference of a connected graph $G$ by the two invariants $\delta(G)$ and $L(G)$.

DeLaVi\~{n}a's computer program, Graffiti.pc, posed attractive conjectures \cite{DE} and some of the conjectures speculate sufficient conditions for traceability based on the minimum degree and leaf number. In 2013, Mukwembi gave a partial solution to the Graffiti.pc 190a. He \cite{M12013} showed that if $G$ is a finite connected graph with minimum degree $\delta(G)\geq 5$, and leaf number $L(G)$ such that $\delta(G)\geq L(G)-1$, then $G$ is hamiltonian and thus traceable. In the same year, he \cite{M22013} relaxed the condition $\delta(G)\geq 5$ to $\delta(G)\geq 3$. After that, Mukwembi \cite{M32013} proved that if $G$ is a connected claw-free graph with $\delta(G)\geq (L(G)+1)/2$, then $G$ is hamiltonian. In recent years, several authors reported on sufficient conditions for a graph to be hamiltonian or traceable based on minimum degree and leaf number, see \cite{M,MM,MMM,MMMV,MM2}.

We state the following results, some of which will be used later in this paper.

\begin{theo}\label{th1}\cite{MMMV}
If $G$ is a connected graph with $\delta(G)\geq (L(G)+2)/2$, then $G$ is hamiltonian.
\end{theo}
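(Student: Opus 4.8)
The plan is to argue the contrapositive. Writing the hypothesis as $2\delta(G)\ge L(G)+2$, i.e. $L(G)\le 2\delta-2$, it suffices to show that every connected non-hamiltonian graph $G$ admits a spanning tree with at least $2\delta-1$ leaves, for then $L(G)\ge 2\delta-1>2\delta-2$, a contradiction. Note that any spanning tree on $n\ge 2$ vertices has at least two leaves, so the hypothesis forces $\delta\ge 2$ throughout, which will be convenient.

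First I would fix a longest path $P=v_0v_1\cdots v_m$ in $G$. By maximality every neighbour of $v_0$ and of $v_m$ lies on $P$; set $A=\{\,j:v_0v_j\in E(G)\,\}$ and $B=\{\,j:v_{j-1}v_m\in E(G)\,\}$, so that $|A|=d(v_0)\ge\delta$ and $|B|=d(v_m)\ge\delta$, both inside $\{1,\dots,m\}$. The key structural observation is that $A\cap B=\varnothing$: if some $j$ lay in both, then $v_0v_jv_{j+1}\cdots v_mv_{j-1}v_{j-2}\cdots v_0$ would be a cycle through all of $V(P)$, which, since $P$ is longest, is either a hamiltonian cycle of $G$ or can be enlarged through a vertex outside it into a path longer than $P$ — both impossible. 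Hence $|A|+|B|\le m$, giving $m\ge 2\delta$, and in particular $v_0\not\sim v_m$.

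Next I would build the spanning tree as a double broom. Fanning $v_0$ out over its neighbours — deleting the path edge entering each neighbour $v_j$ ($j\in A$, $j\ge 2$) and inserting the chord $v_0v_j$ — produces a spanning tree of $G[V(P)]$ in which each maximal block between consecutive neighbours of $v_0$ hangs as a pendant path whose far end is a leaf; performing the symmetric operation at $v_m$ turns this into a double broom, with about $d(v_0)$ leaves accumulating near $v_0$ and $d(v_m)$ near $v_m$. The disjointness $A\cap B=\varnothing$ is exactly the condition ensuring the two fans never reconnect into a cycle, so the output is a genuine tree, and it also keeps the two leaf-families from colliding. Finally, any vertex of $G$ lying off $P$ is attached to this tree as an extra pendant vertex, contributing one more leaf.

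The hard part will be the exact bookkeeping needed to reach $2\delta-1$ rather than only $2\delta-2$: common neighbours of $v_0$ and $v_m$, blocks of length one, and the behaviour at the two ends all threaten to merge or lose a leaf, so I would index the leaves by their positions on $P$ and use $A\cap B=\varnothing$ to bound the overlap of the two families. The decisive extra leaf comes either from a vertex off $P$ (when $P$ is not spanning) or, when $P$ is a hamiltonian path, from a boundary vertex forced to be pendant by $v_0\not\sim v_m$; the few degenerate cases ($\delta=2$, or very short blocks) I would verify directly. If controlling the middle overlap on the path proves awkward, an alternative is to run the same broom construction from a vertex $x$ outside a longest cycle $C$: maximality of $C$ makes the neighbours of $x$ pairwise non-consecutive on $C$, so the arcs of $C$ hang off $x$ with their far ends as leaves, and one then combines this spider with a second broom toward the same $2\delta-1$ target, with the difficulty merely relocated to merging the two brooms.
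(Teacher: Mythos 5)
This theorem is quoted in the paper from \cite{MMMV} without proof, so there is no in-paper argument to compare against; your strategy (contrapositive, longest path, the crossing argument giving $A\cap B=\varnothing$, then a spanning tree with many leaves) is the standard one from that line of work. The skeleton up to $m\ge 2\delta$ and $v_0\not\sim v_m$ is correct.

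The gap is in the tree construction, and it is not merely ``bookkeeping.'' Your claim that $A\cap B=\varnothing$ ``is exactly the condition ensuring the two fans never reconnect into a cycle'' is false. Take $P=v_0v_1v_2v_3v_4$ with $v_0\sim v_3$ and $v_4\sim v_1$, so $A=\{1,3\}$ and $B=\{2,4\}$ are disjoint (and no crossing pair exists, so $P$ can be a longest path of a non-hamiltonian graph). Your recipe deletes $v_2v_3$ and adds $v_0v_3$, then deletes $v_1v_2$ and adds $v_1v_4$; the result is the $4$-cycle $v_0v_1v_4v_3v_0$ together with the isolated vertex $v_2$ --- not a tree. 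Disjointness of $A$ and $B$ rules out one $v_0$-chord and one $v_m$-chord meeting at \emph{consecutive} vertices, but it does not prevent a $v_0$-chord and a $v_m$-chord from jumping over each other and closing a cycle while stranding the segment between them. Consequently the edge swaps at the $v_m$ end must be performed selectively (only when the deleted path edge lies on the unique cycle created by the inserted chord), and one must then argue that enough of the intended $d(v_0)+d(v_m)$ leaf candidates survive the repairs to reach $2\delta-1$. That surviving-leaf count is precisely the substance of the proof in \cite{MMMV}, and your proposal defers it entirely (``the hard part will be the exact bookkeeping''), so as written the argument does not yet establish the theorem.
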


\begin{theo}\label{th2}\cite{MMM}
If $G$ is a connected graph with $\delta(G)\geq (L(G)+1)/2$, then $G$ is traceable.
\end{theo}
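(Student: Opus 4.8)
The plan is to prove the contrapositive, in the style that presumably underlies Theorem~\ref{th1}: assuming $G$ is connected with $L(G)\le 2\delta-1$ but \emph{not} traceable, I will construct a spanning tree of $G$ with at least $2\delta$ leaves, contradicting $L(G)\le 2\delta-1$. A tempting shortcut would be to pass to the join $G\vee K_1$ with a single universal vertex $u$ (since $G$ is traceable iff $G\vee K_1$ is hamiltonian) and then apply Theorem~\ref{th1}; but adding a universal vertex can blow up the leaf number---when $G$ is a path, $G\vee K_1$ already contains a spanning star---so the hypothesis of Theorem~\ref{th1} is destroyed and a direct argument is needed.

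First I would fix a longest path $P=v_1v_2\cdots v_p$ in $G$. Since $G$ is not traceable, $p\le n-1$, so at least one vertex lies off $P$; by maximality every neighbour of $v_1$ and of $v_p$ lies on $P$, and by connectivity some off-path vertex $w$ is adjacent to an interior vertex of $P$ (it cannot be adjacent to $v_1$ or $v_p$, else $P$ would extend). Writing $A=\{i:v_1v_i\in E\}$ and $D=\{i:v_pv_i\in E\}$, we have $|A|=d(v_1)\ge\delta$ and $|D|=d(v_p)\ge\delta$. Maximality of $P$ also yields the usual non-crossing condition: there is no $j$ with $v_1v_j\in E$ and $v_pv_{j-1}\in E$, since such a $j$ would give a cycle spanning $V(P)$, which connectivity would then extend to a longer path; equivalently $A\cap(D+1)=\emptyset$, and in particular $v_1\not\sim v_p$.

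The heart of the proof is the spanning-tree construction. The idea is to use two branch hubs---the endpoints $v_1$ and $v_p$---and to route the remaining vertices as pendant arcs: the arcs of $P$ lying between consecutive neighbours of a hub can be hung off that hub so that each arc contributes one leaf at its far end, producing about $d(v_1)$ leaves from $v_1$ and about $d(v_p)$ from $v_p$. The non-crossing condition $A\cap(D+1)=\emptyset$ is exactly what should let the two families of arcs be chosen so that the two leaf sets are essentially disjoint, while the off-path vertex $w$ supplies the one extra pendant that pushes the total from $2\delta-1$ up to $2\delta$. Heuristically this last point is the genuine gain over Theorem~\ref{th1}: a vertex off \emph{every} longest path is an additional leaf that the endpoint structure alone does not provide.

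The main obstacle is precisely the bookkeeping in this construction. One must verify that the chosen edge set is acyclic and spanning---a tree, not merely a connected spanning subgraph---and that the leaves counted at the two hubs are genuinely distinct, with no arc being absorbed so as to lose a leaf. I expect this to require a short case analysis according to how $A$ and $D$ interleave along $P$ and how $w$'s neighbours sit relative to them, with the non-crossing condition invoked in each case to guarantee that no two counted leaves collide. Once the count reaches $2\delta$, the contradiction with $L(G)\le 2\delta-1$ is immediate and traceability follows.
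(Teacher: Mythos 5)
First, note that the paper offers no proof of Theorem~\ref{th2}: it is quoted from \cite{MMM} and used as a black box, so there is no in-paper argument to measure you against. Your skeleton --- longest path $P=v_1\cdots v_p$, an off-path vertex $w$, the crossing condition $A\cap(D+1)=\emptyset$, a many-leaved subtree extended to a spanning tree --- is indeed the standard machinery in this line of work, and your warning about why one cannot simply apply Theorem~\ref{th1} to $G\vee K_1$ is a correct and worthwhile observation.

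However, the step you defer as ``bookkeeping'' conceals a genuine gap, in two places. First, the count falls short. Each hub $v_{1}$ generates $d(v_1)$ arcs, but the terminal arc (from the last neighbour $v_{\max A}$ out to $v_p$) has the \emph{other hub} as its far end, and symmetrically for $v_p$; since the hubs are internal to the tree, the two hubs contribute at most $(d(v_1)-1)+(d(v_p)-1)\ge 2\delta-2$ genuine far-end leaves, and attaching $w$ as a pendant adds one more, for a total of $2\delta-1$. That is exactly consistent with $L(G)\le 2\delta-1$, so no contradiction is reached; an additional source of a $2\delta$-th leaf is needed (for instance exploiting $2$-connectedness, as in Lemma~\ref{lem6}, to attach the off-path part at two places, or using the $\ge\delta$ neighbours of $w$ as a third hub). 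Second, the disjointness of the two leaf families is not guaranteed by what you proved: the $v_1$-side leaves are the vertices $v_{i-1}$ with $i\in A$ and the $v_p$-side leaves are the vertices $v_{j+1}$ with $j\in D$, so a collision occurs precisely when $A\cap(D+2)\neq\emptyset$, whereas path-maximality only gives you $A\cap(D+1)=\emptyset$. In the colliding case ($i\in A$, $i-2\in D$) the edge-swap construction does not merely lose a leaf --- the vertex $v_{i-1}$ is cut off from both hubs, so the object built is not a spanning tree at all, and one must instead argue from the resulting cycle on $V(P)\setminus\{v_{i-1}\}$. Until these two points are resolved, the contradiction with $L(G)\le 2\delta-1$ is not established.
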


\begin{theo}\label{th3}\cite{MM2}
Let $G$ be a connected triangle-free graph with $L(G)\leq 2\delta(G)-1$. Then $G$ is either hamiltonian or $G\in \mathcal{F}_2$, where $\mathcal{F}_2$ is the class of non-hamiltonian graphs with leaf number $2\delta(G)-1$.
\end{theo}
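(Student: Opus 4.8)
My plan is to read Theorem~\ref{th3} off Theorem~\ref{th1}, which is already in hand. The hypothesis $L(G)\le 2\delta-1$ falls into two ranges. If $L(G)\le 2\delta-2$, then $\delta(G)\ge (L(G)+2)/2$, so Theorem~\ref{th1} applies and $G$ is hamiltonian; this is the substantive case, and it uses neither triangle-freeness nor Theorem~\ref{th2}. If instead $L(G)=2\delta-1$ (the single value on which Theorem~\ref{th1} is silent, since there $(L(G)+2)/2=\delta+1/2>\delta$), the conclusion is essentially definitional: either $G$ is hamiltonian, or it is non-hamiltonian with leaf number exactly $2\delta-1$, i.e. $G\in\mathcal F_2$. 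The two ranges exhaust the hypothesis, so $G$ is hamiltonian or $G\in\mathcal F_2$. On this route I foresee no real obstacle beyond the elementary arithmetic $L\le 2\delta-2\iff\delta\ge(L+2)/2$ separating the cases and the fact that $\mathcal F_2$ is defined by exactly the leaf number $2\delta-1$.

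It is worth recording the self-contained argument that genuinely uses the triangle-free hypothesis, presumably the route taken before Theorem~\ref{th1} was available. It rests on the identity $L(G)=n-\gamma_c(G)$, where $\gamma_c$ is the connected domination number: the internal vertices of a maximum-leaf spanning tree form a minimum connected dominating set, and conversely any connected dominating set yields a spanning tree whose leaves are its complement. So it suffices to show that a connected triangle-free non-hamiltonian graph has a spanning tree with at least $2\delta-1$ leaves. By Theorem~\ref{th2} such a $G$ is traceable; I would fix a Hamiltonian path $v_1\cdots v_n$ with $v_1\not\sim v_n$. The standard Ore closure observation (if $v_1\sim v_{i+1}$ and $v_n\sim v_i$ then $G$ has a Hamiltonian cycle) forces the two shifted neighbour index sets to be disjoint, whence $n\ge 2\delta+1$; and triangle-freeness makes $N(v_1)$ and $N(v_n)$ independent with no two members consecutive along the path. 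When some edge $uv$ dominates $G$, the double broom centred at $uv$ is a spanning tree with $d(u)+d(v)-2$ leaves, and non-hamiltonicity excludes $d(u)+d(v)=2\delta$ (which forces $n=2\delta$ and hence a Hamiltonian cycle), so $d(u)+d(v)\ge 2\delta+1$ and the tree has at least $2\delta-1$ leaves.

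The main obstacle on this direct route is the case in which no single edge dominates $G$, so that every connected dominating set has at least three vertices. Then I would grow the double broom into a dominating caterpillar along a longest cycle (or along the Hamiltonian path) and argue, using triangle-freeness together with the maximality of the cycle, that the number of internal vertices stays at most $n-2\delta+1$; pinning down the configurations that attain equality, with $K_{\delta,\delta+1}$ as the prototypical member of $\mathcal F_2$, is where the real bookkeeping concentrates. Since Theorem~\ref{th1} is available, however, I would present the two-line deduction of the first paragraph as the proof proper and leave this construction as a remark.
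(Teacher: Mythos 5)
Theorem~\ref{th3} is quoted from \cite{MM2} and is not proved in this paper, so there is no internal argument to compare against. Judged on its own terms, your first paragraph is a correct derivation of the statement \emph{as worded here}: $L(G)\le 2\delta(G)-2$ is equivalent to $\delta(G)\ge (L(G)+2)/2$, so Theorem~\ref{th1} settles that range (with no circularity, since \cite{MMMV} predates \cite{MM2}), and in the remaining case $L(G)=2\delta(G)-1$ a non-hamiltonian $G$ lies in $\mathcal{F}_2$ by the very description given.

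The caveat is that this reading makes the theorem contentless and leaves the triangle-free hypothesis doing no work, which strongly suggests that ``the class of non-hamiltonian graphs with leaf number $2\delta(G)-1$'' is an informal gloss and that in \cite{MM2} the class $\mathcal{F}_2$ is an explicitly described family of exceptional graphs; the substance of the theorem is then that classification, which the two-line deduction does not touch. Your second and third paragraphs head in the right direction for the real content (the identity $L(G)=n-\gamma_c(G)$, the double broom at a dominating edge, triangle-freeness forcing $N(u)\cap N(v)=\emptyset$ so the broom has exactly $d(u)+d(v)-2$ leaves, and Dirac excluding $d(u)+d(v)=2\delta$), but you explicitly concede the case in which no single edge dominates $G$, and that is precisely where the work lies: one must control a connected dominating set of size at least three and identify the extremal configurations such as $K_{\delta,\delta+1}$. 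So the proposal is complete only for the literal statement printed in this paper; for the theorem as actually proved in \cite{MM2}, the gap you flag is genuine and remains unfilled.
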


Let $p(G)$ and $c(G)$ be the order of a longest path and a longest cycle in a graph $G$, respectively. Note that $c(G)$ is equal to the circumference of a graph $G$. Many researchers have investigated the relation between $p(G)$ and $c(G)$ (\cite{EHKS},\cite{L},\cite{OY},\cite{PY}). Motivated by Theorem \ref{th2}, we obtain the following main result.

\begin{theo}\label{th4}
Let $G$ be a connected graph of order $n$. If $L(G)\leq 2\delta(G) -1$, then $c(G)\geq n-1$. The bound is sharp and the condition cannot be relaxed.
\end{theo}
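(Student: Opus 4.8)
The hypothesis $L(G)\le 2\delta(G)-1$ is equivalent to $\delta(G)\ge(L(G)+1)/2$, the exact hypothesis of Theorem~\ref{th2}, so $G$ is traceable. Theorem~\ref{th1}, whose hypothesis $\delta(G)\ge(L(G)+2)/2$ reads $L(G)\le 2\delta(G)-2$, covers everything except the boundary value: if $L(G)\le 2\delta(G)-2$ then $G$ is hamiltonian and $c(G)=n\ge n-1$. So the whole theorem reduces to the single case $L(G)=2\delta(G)-1$, where Theorem~\ref{th2} still provides a Hamiltonian path $P=v_1v_2\cdots v_n$ (so $p(G)=n$). If $G$ is hamiltonian we are done; otherwise I would assume for contradiction that $c(G)\le n-2$ and aim to build a spanning tree with at least $2\delta(G)$ leaves, contradicting $L(G)=2\delta(G)-1$.

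The first substantive step is to read off the non-edges forced at the two ends of $P$ by the usual rotation/edge-exchange argument. Since $G$ is not hamiltonian there is no $i$ with $v_i\in N(v_1)$ and $v_{i-1}\in N(v_n)$ (else $v_1v_iv_{i+1}\cdots v_nv_{i-1}\cdots v_1$ is a Hamiltonian cycle); reading this as disjointness of two subsets of $\{2,\dots,n\}$ yields $d(v_1)+d(v_n)\le n-1$, hence $n\ge 2\delta(G)+1$. The stronger assumption $c(G)\le n-2$ forbids cycles of length $n-1$ as well, which immediately gives $v_1v_{n-1}\notin E(G)$, $v_2v_n\notin E(G)$, and, applying the same exchange to the sub-paths $v_1\cdots v_{n-1}$ and $v_2\cdots v_n$, rules out every index $i$ that would close an $(n-1)$-cycle missing $v_1$ or missing $v_n$. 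Iterating rotations at the $v_1$-end in P\'osa fashion, I would enlarge $\{v_1\}$ to the set $Y$ of all vertices that can be the free endpoint of a Hamiltonian path ending at $v_n$, and use that the neighbours of $Y$ are confined to the predecessors and successors along $P$ of $Y$; this packages the absence of $n$- and $(n-1)$-cycles as a large family of non-edges incident with $Y\cup(\{v_n\}\cup N(v_n))$.

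The heart of the proof is to convert these non-edges into leaves. Starting from $P$ (or a rotated Hamiltonian path), I would reroute near the two ends so that, for each forced non-adjacency, one of its endpoints becomes a pendant vertex of the tree: the degree bound $d(\cdot)\ge\delta(G)$ supplies enough edges to reattach each such vertex into the growing tree, while the non-edges guarantee it can be left with tree-degree $1$. Counting the pendant vertices produced at the $v_1$-side and at the $v_n$-side should give at least $2\delta(G)$ leaves in total, the desired contradiction. I expect this tree construction to be the main obstacle, and the principal source of casework: the rerouting, and the exact leaf count, depend on how $N(v_1)$ and $N(v_n)$ sit along $P$ --- whether they interleave, cluster at one end, or leave long gaps, and how the vertices outside a longest cycle attach --- so several configurations must be treated separately to produce the uniform bound $2\delta(G)$. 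This is precisely why the preamble sets up dedicated case and subcase macros.

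Finally, I would settle the two optimality assertions by explicit families: a non-hamiltonian graph satisfying $L(G)=2\delta(G)-1$ with $c(G)=n-1$ exactly, showing that $n-1$ cannot be improved to $n$; and a graph with $L(G)=2\delta(G)$ and $c(G)<n-1$, showing that the hypothesis cannot be relaxed to $L(G)\le 2\delta(G)$.
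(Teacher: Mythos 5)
Your reduction to the boundary case $L(G)=2\delta(G)-1$ via Theorems~\ref{th1} and~\ref{th2} is correct, and the contradiction you aim for --- a spanning tree (equivalently, a subtree, which always extends to a spanning tree with no fewer leaves) with $2\delta(G)$ leaves --- is indeed the mechanism the paper uses at every turn. But the heart of your proof is missing. The step ``reroute near the two ends so that each forced non-adjacency yields a pendant vertex, and count at least $2\delta(G)$ leaves'' is precisely the hard combinatorial content of the theorem, and you neither carry out the rerouting nor perform the count; you explicitly defer both to unspecified casework. As written, your argument establishes only that $G$ is traceable and that $n\geq 2\delta(G)+1$, which is far from $c(G)\geq n-1$. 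There is no evidence that a P\'osa-rotation family of non-edges can be converted into $2\delta(G)$ leaves uniformly in $n$, and the fact that the paper does not attempt such an argument suggests it is not straightforward.

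The paper supplies the finiteness your sketch lacks, by a genuinely different route. Lemma~\ref{lem5} bounds the order: $n\leq\max\{2\delta+6,3\delta\}$. For $n\leq 3\delta$, since three independent vertices have degree sum $\sigma_3(G)\geq 3\delta\geq n$, Lemma~\ref{lem10} (Enomoto--van den Heuvel--Kaneko--Saito) gives $c(G)\geq p(G)-1=n-1$ directly, unless $G$ lies in one of six explicit exceptional families, each of which is then shown to force $L(G)\geq 2\delta$ (Theorem~\ref{th11}). In the remaining window $3\delta+1\leq n\leq 2\delta+6$ one gets $\delta\leq 5$, and the proof becomes a finite case analysis over $(\delta,n,\Delta(G),k)$ using Lemmas~\ref{lem14}--\ref{lem17}, where the local ``tree with too many leaves'' contradictions are built by hand in each configuration. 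Without an order bound of this kind, your construction must work for arbitrary $n$, and nothing in the proposal indicates how the leaf count reaches $2\delta(G)$ in general. Finally, the sharpness and non-relaxability claims require exhibiting explicit graphs (the paper's $G_1$, a cycle $C_{n-1}$ with one pendant-path vertex attached at $v_1$ and $v_3$, and $G_2$, with two such vertices); stating that such examples should exist is not a proof of either assertion.
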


We also consider regular graphs.

\section{Main results}
We start with some lemmas that will be used repeatedly.
\begin{lem}\label{lem5}\cite{M32013}
Let $G$ be a connected graph of order $n$. If $L(G)\leq 2\delta(G)-1$, then $n\leq \max\{2\delta(G)+6, 3\delta(G)\}$.
\end{lem}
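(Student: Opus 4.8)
The plan is to argue by contradiction: assume $n > \max\{2\delta+6,\,3\delta\}$ (writing $\delta=\delta(G)$) and produce a spanning tree of $G$ with at least $2\delta$ leaves, contradicting $L(G)\le 2\delta-1$. The basic device is that a set $S\subseteq V(G)$ can be realized simultaneously as the leaf set of a spanning tree precisely when $G-S$ is connected and every vertex of $S$ has a neighbor outside $S$; equivalently, building a spanning tree with many leaves amounts to exhibiting a small connected set that dominates $G$. Thus for any vertex $u$, hanging the vertices of $N(u)$ as pendant vertices off $u$ yields $\delta-1$ leaves, once a single neighbor of $u$ is reserved to connect $u$ to the rest of the tree.

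First I would try to find three vertices $u,w,z$ that are pairwise at distance at least $3$. Then $N[u],N[w],N[z]$ are pairwise disjoint, each of size at least $\delta+1$, and—modulo the connectivity issue below—one can reserve a single neighbor of each hub for connection and declare the remaining neighbors leaves, producing at least $3(\delta-1)$ leaves. Since $3(\delta-1)\ge 2\delta$ for $\delta\ge 3$, this already contradicts $L(G)\le 2\delta-1$ (and the breakdown at $\delta=2$ matches the degenerate cycle case). A convenient way to locate such a triple is a shortest path of length at least $6$, whose two ends and midpoint are pairwise at distance at least $3$; so this case applies whenever the diameter of $G$ is large.

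It then remains to treat graphs of small diameter, equivalently those with no three pairwise-distant hubs. Here I would fix a shortest path $v_0 v_1\cdots v_d$ with $d$ small and observe that, by the distance bound, the closed neighborhoods of a bounded number of vertices on this path already cover $V(G)$. One then counts: each such neighborhood has at least $\delta+1$ vertices, but feeding $L(G)\le 2\delta-1$ back through the leaf construction forces these neighborhoods either to overlap heavily or to contain only a bounded number of vertices outside a small core, which caps $n$ at $2\delta+6$. The two regimes $2\delta+6$ and $3\delta$ should then correspond to the dense, small-diameter case and the more spread-out case respectively, with the crossover near $\delta=6$.

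The hard part is the bookkeeping that makes each leaf construction legitimate: in every case one must guarantee that the complement of the intended leaf set stays connected—so that no reserved connector is a cut vertex isolating part of $G$—and that the counted leaves are genuinely distinct. Controlling this connectivity, together with a careful case analysis on how the $\delta$ neighbors of the relevant vertices are distributed, is where the real work lies, and it is also what must be pushed to squeeze the additive constants so that the bounds $2\delta+6$ and $3\delta$ come out exactly rather than merely up to a constant.
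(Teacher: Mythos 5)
The paper gives no proof of this lemma at all---it is imported verbatim from the cited reference---so the only comparison available is against the known argument there, which does begin the way you do: with pairwise-distance-$\ge 3$ ``hubs'' whose closed neighborhoods are disjoint stars, assembled into a tree and extended to a spanning tree. (Incidentally, the connectivity bookkeeping you single out as the hard part is handled by a standard lemma---a tree subgraph with $k$ leaves always extends to a spanning tree with at least $k$ leaves---so that is not where the difficulty lies.) The first genuine problem is your leaf count. Joining three disjoint stars into one tree requires two connecting paths, and each such path converts a leaf into an internal vertex at \emph{each} of its two ends: the path cannot exit a star through its center, since the next vertex on the path would then lie inside that same star. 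So three hubs guarantee only $3\delta-2\cdot 2=3\delta-4$ leaves, not $3(\delta-1)$, and $3\delta-4\ge 2\delta$ only for $\delta\ge 4$. Your claim that three distant hubs ``already contradict'' $L(G)\le 2\delta-1$ therefore fails exactly at $\delta=3$, which is the regime where $2\delta+6$ dominates $3\delta$ and where all the delicate work in this paper takes place; there you would need four pairwise-distant hubs (hence diameter at least $9$) before reaching a contradiction, leaving a much larger residual range of diameters to analyze.

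The second and larger gap is that the small-diameter case---which is the actual content of the lemma---is only gestured at. Knowing that every vertex lies within distance $2$ of one, two, or three hubs does not cap $n$: a single ball of radius $2$ around a hub $u$ can naively contain up to $1+d(u)+\sum_{x\in N(u)}(d(x)-1)$ vertices, which is of order $\delta^2$, far above $3\delta$. The hypothesis $L(G)\le 2\delta-1$ does constrain this (a double star centered at $u$ and at $x\in N(u)$ already has $d(u)+d(x)-2$ leaves, so each neighbor of a hub can reach only boundedly many new vertices), but extracting the exact constants $2\delta+6$ and $3\delta$ from this requires the careful diametral-path analysis and overlap counting that your sketch defers entirely to ``bookkeeping.'' That bookkeeping \emph{is} the lemma; as written, the proposal establishes neither bound. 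One last point in your favor: your parenthetical about the ``degenerate cycle case'' is apt---as literally stated the lemma fails for $\delta=2$ (every cycle $C_n$ has $L=2\le 3$), so the statement implicitly assumes $\delta\ge 3$, and any correct proof must start from that restriction.
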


\begin{lem}\label{lem6}\cite{M32013}
Let $G$ be a connected graph with $L(G)\leq 2\delta(G)-1$. Then $G$ is 2-connected.
\end{lem}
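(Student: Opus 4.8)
The plan is a proof by contradiction. First note that the hypothesis $L(G)\le 2\delta-1$ already forces $\delta\ge 2$: since $G$ is nontrivial it has at least two vertices, so any spanning tree has at least two leaves, giving $2\delta-1\ge L(G)\ge 2$ and hence $\delta\ge 2$ and $n\ge\delta+1\ge 3$. Now suppose $G$ is \emph{not} $2$-connected. Because $G$ is connected with $n\ge 3$, this means $G$ has a cut vertex, so its block--cut tree has at least two end-blocks $B_1,B_2$, each containing a single cut vertex $c_1,c_2$ of $G$. I first record that no end-block is a bridge: if $B_i=\{c_i,x\}$ were a single edge, then $x$ would have all its neighbours inside $B_i$ and hence degree $1<\delta$, impossible. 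Thus each $B_i$ is $2$-connected, every vertex of $B_i\setminus\{c_i\}$ has all its $G$-neighbours inside $B_i$ and therefore degree $\ge\delta$ within $B_i$, and $\deg_{B_i}(c_i)\ge 2$.

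The combinatorial core is the following per-block claim: each end-block $B$ with cut vertex $c$ admits a spanning tree having at least $\delta$ leaves lying in $V(B)\setminus\{c\}$. Granting this, I assemble a spanning tree $T$ of $G$ along the block--cut tree by taking such a spanning tree in $B_1$ and in $B_2$ and an arbitrary spanning tree in every other block. A vertex of $B_i\setminus\{c_i\}$ has all its neighbours inside $B_i$, so its degree in $T$ equals its degree in the chosen spanning tree of $B_i$; in particular the guaranteed leaves of $B_1$ and of $B_2$ remain leaves of $T$. Since distinct end-blocks can meet only at a common cut vertex, the sets $B_1\setminus\{c_1\}$ and $B_2\setminus\{c_2\}$ are disjoint, so $T$ has at least $\delta+\delta=2\delta$ leaves. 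This gives $L(G)\ge 2\delta$, contradicting $L(G)\le 2\delta-1$ and completing the proof.

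To establish the per-block claim I would pick a vertex $w\in V(B)\setminus\{c\}$ of maximum degree $\Delta:=\Delta(B)\ge\delta$ and build a spanning tree $T_B$ containing all $\Delta$ edges at $w$ (the star at $w$ is a subforest and extends to a spanning tree). Rooting $T_B$ at $w$, the $\Delta$ subtrees hanging from the children of $w$ are vertex-disjoint and each contains a vertex with no children, i.e.\ a leaf of $T_B$; since $w$ itself has degree $\Delta\ge 2$ it is not a leaf, so $T_B$ has at least $\Delta\ge\delta$ leaves. If $c$ is not among them, or if $\Delta\ge\delta+1$, then at least $\delta$ of these leaves avoid $c$ and we are done.

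The remaining case $\Delta=\delta$ with $c$ itself a leaf of $T_B$ is the genuinely delicate one, where the naive count yields only $\delta-1$ leaves away from $c$, and it is where I expect the main difficulty. Here the $2$-connectivity of the end-block is essential: since $\deg_B(c)\ge 2$, the vertex $c$ has a neighbour $c'$ other than its tree-parent, and adding the chord $cc'$ and deleting a suitable edge of the resulting cycle lets me promote $c$ to an internal vertex, or else create a compensating new leaf, without reducing the number of leaves. Making this exchange precise---verifying that one can always reroute so that $c$ becomes internal while retaining at least $\delta$ leaves, after disposing of the few smallest configurations such as $B=K_{\delta+1}$ directly (where centring the star at $c$ already gives $\delta$ leaves off $c$)---is the crux; it is a maximum-leaf spanning-tree exchange of Kleitman--West type, adapted to keep the distinguished vertex $c$ out of the leaf set.
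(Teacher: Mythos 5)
The paper never proves this lemma at all: it is quoted from Mukwembi \cite{M32013}, so there is no in-paper argument to compare against, and your proposal must stand on its own. Its outer skeleton does stand: $\delta\ge 2$ follows as you say, every end-block is $2$-connected (none can be a bridge), spanning trees chosen block-by-block glue into a spanning tree of $G$, and the leaves you secure in $B_1\setminus\{c_1\}$ and $B_2\setminus\{c_2\}$ are preserved and disjoint. So the lemma would indeed follow from your per-block claim (each end-block $B$ with cut vertex $c$ has a spanning tree with at least $\delta$ leaves avoiding $c$). But that claim \emph{is} the entire content of the lemma, you leave its critical case as a sketch, and the sketched exchange is not just unverified --- it is false as stated.

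Here is a concrete family on which it fails. For $\delta\ge 3$ let $B$ be obtained from $K_{\delta+1}$ on $\{x_1,\dots,x_{\delta+1}\}$ by deleting the edge $x_1x_2$ and adding $c$ joined to $x_1$ and $x_2$. Then $B$ is $2$-connected, every vertex other than $c$ has degree exactly $\delta$, and $d(c)=2$, so you are forced into your delicate case. A short count shows that the \emph{only} spanning trees of $B$ with $\delta$ leaves avoiding $c$ are the two trees $\{x_ix_j:\ j\notin\{1,2\}\}\cup\{cx_1,cx_2\}$, $i\in\{1,2\}$, i.e.\ stars centred at a neighbour of $c$ together with both edges at $c$. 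Your procedure may legitimately pick $w=x_3$ as its maximum-degree vertex (all degrees off $c$ are tied), giving $T_B=\{x_3x_j:\ j\ne 3\}\cup\{cx_1\}$ with $c$ a leaf and $\delta-1$ leaves off $c$. The unique cycle of $T_B+cx_2$ is $c,x_1,x_3,x_2,c$, and deleting any of its edges other than $cx_2$ leaves exactly $\delta-1$ leaves different from $c$: deleting $cx_1$ keeps $c$ a leaf, while deleting $x_1x_3$ or $x_2x_3$ makes $c$ internal but destroys the leaf $x_2$ (resp.\ $x_1$) at the same time. So no single exchange can ``promote $c$ to an internal vertex \ldots without reducing the number of leaves''; the optimum is simply not reachable by your local move, because the whole tree must be re-centred at a neighbour of $c$. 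Note also that these exceptional graphs form an infinite family, one for every $\delta$, so they cannot be disposed of together with ``the few smallest configurations'' like $K_{\delta+1}$; and even the natural repair (grow the tree from a neighbour $w_1$ of $c$ and insert both edges $cw_1,cw_2$ at the start) still needs further argument when $d_B(w_1)=\delta$ and $w_1w_2\in E(B)$, where the one-leaf-per-subtree count again guarantees only $\delta-1$. Until the per-block claim is actually proved, the proposal does not establish the lemma.
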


For a graph $G$, $\kappa(G)$ and $\alpha(G)$ denote the connectivity and independent number of $G$, respectively. Let $\sigma_k(G)$ be the minimum degree sum of $k$ independent vertices of $G$ if $\alpha(G)\geq k$. $K_n$ stands for the \emph{complete graph} of order $n$.

\begin{lem}\label{lem7}\cite{CE}
Let $G$ be a connected graph. If $\kappa(G)\geq \alpha(G)$, then $G$ is hamiltonian except for $G=K_2$.
\end{lem}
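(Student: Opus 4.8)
The plan is to prove the classical Chv\'atal--Erd\H{o}s criterion directly by a longest-cycle argument. Write $\kappa=\kappa(G)$ and $\alpha=\alpha(G)$, so that $\kappa\geq\alpha$ and $G$ is connected. First I would dispose of the degenerate cases: since $G$ is connected we have $\kappa\geq 1$, and if $\alpha=1$ then $G$ is complete, hence $G=K_n$, which is hamiltonian for $n\geq 3$ and yields exactly the stated exception $G=K_2$. Thus I may assume $\alpha\geq 2$, which forces $\kappa\geq 2$; in particular $G$ contains a cycle.

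Now take a longest cycle $C$ in $G$, fix an orientation, and for $x\in V(C)$ write $x^+$ for its successor along that orientation. Suppose, for contradiction, that $C$ is not hamiltonian, and let $H$ be a component of $G-V(C)$, fixing a vertex $u\in V(H)$. Let $X=N(H)\cap V(C)=\{a_1,\ldots,a_k\}$, listed in cyclic order. Since $H$ attaches to the rest of $G$ only through $X$ and $|V(C)|\geq 3$, the case $k=1$ would make $a_1$ a cut vertex, contradicting $\kappa\geq 2$; hence $k\geq 2$.

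The heart of the argument is to produce a large independent set from the successors $a_i^+$. The first step is a gap claim: for consecutive neighbors $a_i,a_{i+1}$ on $C$, the arc between them has length at least two, so $a_i^+\notin X$. Indeed, because $H$ is connected and meets both $a_i$ and $a_{i+1}$, there is an $a_i$--$a_{i+1}$ path whose interior lies in $H$; if the arc from $a_i$ to $a_{i+1}$ were a single edge, replacing it by this path would give a cycle longer than $C$. Consequently the vertices $a_1^+,\ldots,a_k^+$ are distinct, all lie on $C$, and none lies in $X$; in particular $V(C)\setminus X\neq\emptyset$, so $X$ is genuinely a separating set and $k=|X|\geq\kappa$. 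I would then show that $I=\{a_1^+,\ldots,a_k^+\}\cup\{u\}$ is independent. No $a_i^+$ is adjacent to $u$, because $a_i^+\notin X=N(H)\cap V(C)$ while $u\in V(H)$. And no two successors $a_i^+,a_j^+$ are adjacent: given such an edge $a_i^+a_j^+$ together with a path from $a_i$ to $a_j$ whose interior lies in $H$, one builds a new cycle by following this $H$-path from $a_i$ to $a_j$, traversing $C$ backwards from $a_j$ to $a_i^+$, crossing the edge $a_i^+a_j^+$, and traversing $C$ forwards from $a_j^+$ back to $a_i$; this cycle meets every vertex of $C$ and, in addition, the interior vertices of the $H$-path, again contradicting the maximality of $C$.

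Putting this together yields $\alpha\geq|I|=k+1\geq\kappa+1>\kappa\geq\alpha$, a contradiction, so $C$ must be hamiltonian. The main obstacle, and the step deserving the most care, is the rerouting in the last independence claim: one must verify that the proposed walk really is a cycle visiting each vertex of $C$ exactly once while strictly gaining the interior $H$-vertices, which is where the choice of orientation and the precise placement of the four vertices $a_i,a_i^+,a_j,a_j^+$ matters. Everything else reduces to bookkeeping about arcs of $C$ together with the connectivity and independence inequality.
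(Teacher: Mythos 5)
Your proposal is correct and complete: it is precisely the classical Chv\'atal--Erd\H{o}s longest-cycle argument, with all the key steps (the gap claim $a_i^+\notin X$, the independence of $\{a_1^+,\ldots,a_k^+\}\cup\{u\}$ via the rerouting cycle, and the final contradiction $\alpha\geq k+1\geq\kappa+1>\kappa\geq\alpha$) carried out soundly. The paper does not prove this lemma but simply cites it from \cite{CE}, and your argument is the standard proof of that cited theorem, so the two approaches coincide.
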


\begin{lem}\label{lem8}\cite{EHKS}
Let $G$ be a 2-connected graph of order $n$. If $\sigma_3(G)\geq n+2$, then $c(G)\geq p(G)-1$.
\end{lem}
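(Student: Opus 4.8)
The plan is to take a longest path and turn it into a cycle that omits at most one of its vertices, using the degree-sum hypothesis only to eliminate the obstructions to doing so. Fix a longest path $P = v_1 v_2 \cdots v_p$, so $p = p(G)$. Since $P$ is longest, every neighbour of $v_1$ and of $v_p$ lies on $P$: an off-path neighbour $w$ of $v_1$ would give the longer path $w v_1 v_2 \cdots v_p$. If $v_1 \sim v_p$, then $P$ together with the edge $v_1 v_p$ is a cycle of length $p$, so $c(G) \ge p \ge p-1$ and we are finished; hence I may assume $v_1 \not\sim v_p$. I would then record the index sets $X = \{\, i : v_1 \sim v_{i+1}\,\}$ and $Y = \{\, i : v_p \sim v_i\,\}$; because $v_1 \not\sim v_p$ and neither vertex is adjacent to itself, $X \subseteq \{1,\dots,p-2\}$ and $Y \subseteq \{2,\dots,p-1\}$, with $|X| = d(v_1)$ and $|Y| = d(v_p)$.

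The heart of the argument is a crossing analysis on these sets. If some index $i$ satisfies $v_1 \sim v_{i+1}$ and $v_p \sim v_i$, then $v_1 v_{i+1} v_{i+2} \cdots v_p v_i v_{i-1} \cdots v_1$ is a cycle through all $p$ vertices, so $c(G) \ge p$. If instead some $i$ satisfies $v_1 \sim v_{i+1}$ and $v_p \sim v_{i-1}$, then $v_1 v_{i+1} \cdots v_p v_{i-1} v_{i-2} \cdots v_1$ is a cycle through every vertex except $v_i$, so $c(G) \ge p-1$. In either case the lemma holds, so I may assume both configurations are absent, which is exactly $X \cap \bigl(Y \cup (Y+1)\bigr) = \emptyset$. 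Since $X \subseteq \{1,\dots,p-2\}$ and $Y \subseteq \{2,\dots,p-1\}$ are disjoint, this already forces $d(v_1) + d(v_p) \le p-1$.

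To bring in the $\sigma_3$ hypothesis I would produce a third vertex $u$ with $\{v_1, u, v_p\}$ independent. I would split on whether $P$ spans $G$. If $p < n$, take a component $F$ of $G - V(P)$; 2-connectivity forces $F$ to have at least two attachments to $P$, all at interior vertices, so any vertex of $F$ is non-adjacent to both endpoints and can serve as $u$. If $p = n$, then either $N[v_1] \cup N[v_p] = V(G)$, which together with $d(v_1)+d(v_p) \le n-1$ pins the two neighbourhoods down tightly enough to exhibit a $(p-1)$-cycle of the crossing type directly, or some $v_k \notin N[v_1] \cup N[v_p]$ yields the independent triple $u = v_k$. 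In every surviving case I would read off, from the positions along $P$ of the neighbours of $v_1$, $u$, and $v_p$, that any coincidence or near-coincidence between two of these neighbour sets would create a cycle of length at least $p-1$ exactly as above; hence the three neighbour sets occupy essentially disjoint stretches of $P$ (together with the few vertices off $P$), and counting these positions gives $d(v_1) + d(u) + d(v_p) \le n+1$. This contradicts $\sigma_3(G) \ge n+2$, so the standing assumption that both crossing configurations fail is untenable, and $c(G) \ge p-1$.

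The main obstacle is the third step. The clean bound $d(v_1)+d(v_p) \le p-1$ is by itself too weak — when $P$ is Hamiltonian it leaves almost no room for $d(u)$ — so the argument cannot simply add a third degree but must extract genuine mutual disjointness among all three neighbour sets from the nonexistence of a $(p-1)$-cycle. Carrying this out requires Pósa-type rotations to generate alternative endpoints while ensuring the rotated endpoints remain non-adjacent (so that independence of the triple is preserved), careful bookkeeping of the vertices of $G - V(P)$ via 2-connectivity, and a gap count precise enough to account for both the additive constant in $n+2$ and the single vertex that may be missed. That exact gap-counting, reconciling the on-path and off-path contributions, is where the real work lies.
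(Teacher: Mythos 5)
You have correctly identified where the difficulty lies, but that is also exactly where your proof stops: the case $p(G)=n$ (a Hamiltonian path) is not handled, and that case is essentially the whole content of the lemma. Your counting is sound when some vertex $u$ lies off the path: then every ``coincidence'' you list genuinely lengthens the path or closes a cycle of length at least $p-1$, the position sets of $N(v_1)$, $N(v_p)$ and $N_P(u)$ (suitably shifted) are pairwise disjoint, and one even gets $d(v_1)+d(u)+d(v_p)\le n-1$, a contradiction. But when $u=v_k$ is an interior vertex of $P$, the pivotal claim --- that any near-coincidence between $N(u)$ and $N(v_1)$ or $N(v_p)$ creates a cycle of length at least $p-1$ --- is false: from $u\sim v_i$ and $v_1\sim v_{i+1}$ (with $i<k$) you only obtain the cycle $v_1v_2\cdots v_i v_k v_{k-1}\cdots v_{i+1}v_1$ of length $k$, i.e.\ a P\'osa rotation with a new endpoint, not a long cycle. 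So the three-way disjointness count does not follow from your exclusions, and the inequality $d(v_1)+d(u)+d(v_p)\le n+1$ is precisely what has to be proved, not what falls out of the setup. The graphs $K_2\vee(K_a+K_b+K_c)$ (the family $\mathcal{F}_3(n)$ of Lemma \ref{lem10}) show how delicate this is: they are 2-connected and traceable, the triple consisting of the two path endpoints and a vertex of the third clique has degree sum exactly $n+1$, yet $c=n-\min\{a,b,c\}$, far below $n-1$. Any correct argument must implicitly exclude these (and the other families of Lemma \ref{lem10}), and that exclusion is the ``real work'' you defer.

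Two further points. First, your fallback in the subcase $N[v_1]\cup N[v_p]=V(G)$ is also not right as stated: the transition index that coverage produces is a crossing in the wrong direction ($v_1\sim v_i$ and $v_p\sim v_{i+1}$), which yields two vertex-disjoint cycles partitioning $V(G)$ rather than one cycle of length $\geq p-1$; turning this into a long cycle needs 2-connectivity and further rotations, none of which is supplied. Second, for calibration: the paper itself gives no proof of this statement --- it is quoted from \cite{EHKS}, where it is one of the main theorems of that paper, proved by an extended analysis built around a longest cycle and a longest path outside it. So what is missing from your proposal is not a routine verification but the theorem itself; as written, the proposal is a correct reduction of the problem to its hard case, together with an acknowledgment that the hard case is open.
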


Now we first show that the result of Theorem \ref{th4} is true when $n\leq 3\delta(G)$.

\begin{lem}\label{lem9}\cite{M22013}
Let $G$ be a connected graph of order $n$. If $\delta(G)=2$ and $L(G)\leq 3$, then $c(G)\geq n-1$.
\end{lem}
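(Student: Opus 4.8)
The plan is to argue by contradiction on a longest cycle, using the leaf bound $L(G)\le 3$ to forbid any spanning tree with four leaves, and to fall back on the smallness of $G$ (Lemma \ref{lem5} gives $n\le \max\{2\delta+6,3\delta\}=10$) together with $2$-connectedness (Lemma \ref{lem6}) to clear up the residual configurations. First I would fix a longest cycle $C$ with $c(G)=|C|=:c$ and assume for contradiction that $c\le n-2$, so that $H:=G-V(C)$ has at least two vertices. Writing $P_C$ for a Hamiltonian path of $C$ obtained by deleting one edge, the two ends of $P_C$ are leaves of any spanning tree of $G$ that contains $P_C$; the whole strategy is to attach the rest of $G$ to $P_C$ so as to manufacture two further leaves, contradicting $L(G)\le 3$.

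The first and easiest case is when $H$ is disconnected. Then I would take, for each component $H_i$, a spanning tree together with a single edge joining $H_i$ to an \emph{internal} vertex of $P_C$; a component that is a single vertex contributes one new leaf, and a component whose spanning tree has two leaves contributes at least one new leaf after attachment, so two distinct components already yield two new leaves and hence a spanning tree of $G$ with at least four leaves, which is impossible. Whenever the attachment is forced onto an end of $P_C$ I would re-choose which edge of $C$ to delete, or, in the genuinely degenerate situation in which an off-cycle vertex sees only two fixed vertices of $C$, re-route $C$ through that vertex to produce a cycle longer than $C$, again a contradiction. This reduces the problem to $H$ connected with $|V(H)|\ge 2$.

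For connected $H$ I would repeat the attachment idea: choose a spanning tree $T_H$ of $H$ with the maximum number of leaves and join $H$ to an internal vertex $u$ of $P_C$ by one edge landing on a non-leaf of $T_H$. Since $T_H$ has at least two leaves, this again produces four leaves in total unless $T_H$ is forced to be a Hamiltonian path of $H$ and, moreover, every edge from $H$ to $C$ emanates from an end of that path. In this remaining path-like case I would invoke $2$-connectedness: $H$ sends at least two independent edges to $C$, say from the two ends of its Hamiltonian path to vertices $u_i,u_j\in V(C)$. Splicing the path through $H$ between $u_i$ and $u_j$ in place of the shorter of the two arcs of $C$ joining $u_i$ and $u_j$ yields a cycle longer than $C$ as soon as that arc has at least three edges, contradicting maximality; the only survivors are configurations in which the two attachment points lie within distance two on $C$, and these finitely many possibilities (recall $n\le 10$) I would rule out directly.

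I expect the main obstacle to be exactly this last, path-like, connected case: here neither the pure leaf-counting nor the pure cycle-rotation argument closes on its own, and the two must be combined with the bound $n\le 10$ from Lemma \ref{lem5}. A convenient alternative entry point, which I would keep in reserve, is that $\delta=2=(L(G)+1)/2$ already forces $G$ to be traceable by Theorem \ref{th2}; starting from a Hamiltonian path $v_1\cdots v_n$ one can run the same rotation and leaf analysis on the two pendant ends, which may shorten the bookkeeping in the residual configurations.
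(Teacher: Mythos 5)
The paper offers no proof of this lemma at all: it is imported verbatim from Mukwembi \cite{M22013}, so there is no in-paper argument to compare against and your attempt has to be judged on its own. Your overall plan (longest cycle $C$, try to manufacture a spanning tree with four leaves, fall back on $n\le 10$ from Lemma \ref{lem5} and $2$-connectedness from Lemma \ref{lem6}) is a sensible way to reprove the result, but as written it has a concrete error and leaves the genuinely hard configurations untouched. The error is in the splicing step: replacing the shorter arc of $C$ between $u_i$ and $u_j$ (say with $a$ edges) by the path through $H$ (with $|V(H)|+1\ge 3$ edges) lengthens the cycle precisely when $a<|V(H)|+1$, i.e.\ when the attachment points are \emph{close}; you assert the opposite, that a longer cycle appears ``as soon as that arc has at least three edges'' and that the survivors are the configurations with attachment points within distance two. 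Those are exactly the configurations your splice already kills. The true residual cases are the ones where \emph{both} arcs have at least $|V(H)|+1$ edges (forcing $n\ge 3|V(H)|+2$, hence $|V(H)|=2$ and $c\ge 6$), and these --- e.g.\ $C_6$ plus a pendant $K_2$ attached at two antipodal vertices, where one must exhibit a $4$-leaf spanning tree rather than a longer cycle --- are precisely the cases your sketch never examines.

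Two further points are deferred rather than proved. In the disconnected case, ``re-choose which edge of $C$ to delete'' can fail outright: if $c=4$ and both components attach only at $c_1$ and $c_3$, every edge of $C$ meets an attachment vertex and no re-routing lengthens $C$; one must instead build the $4$-leaf tree directly through the shared attachment vertex. And the assertion that the two ends of $P_C$ are leaves of \emph{any} spanning tree containing $P_C$ is false as stated --- they are leaves only when no other tree edge is incident to them, which is the whole reason the attachment points must be forced to be internal. None of these obstacles looks fatal to the method, and your fallback via Theorem \ref{th2} (traceability from $\delta=2\ge (L+1)/2$) is a legitimate alternative entry point, but in its present form the argument does not yet constitute a proof.
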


Given graphs $G$ and $H$, the notation $G + H$ means the \emph{disjoint union} of $G$ and $H$. Then $tG$ denotes the disjoint union of $t$ copies of $G$. The notation $G \vee H$ means the \emph{joint} of $G$ and $H$. For graphs we will use equality up to isomorphism, so $G = H$ means that $G$ and $H$ are isomorphic.

For any graph $G$, $G[S]$ denotes the subgraph of $G$ induced by $S \subseteq V(G)$. Let $A,B\subseteq V(G)$ and $A\cap B=\emptyset$. Denote by $E(A,B)$ the set of edges of $G$ with one end in $A$ and the other end in $B$ and $e(A,B)=|E(A,B)|$.

\begin{lem}\label{lem10}\cite{EHKS}
Let $G$ be a connected graph with order $n\geq 3$. If $\sigma_3(G)\geq n$, then $G$ satisfies $c(G)\geq p(G)-1$ or $G\in\mathcal{F}(n)$, where $\mathcal{F}(n)$ is the class of graphs defined below.
\end{lem}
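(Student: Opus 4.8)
The statement compares the relative length $p(G) - c(G)$ with the degree-sum parameter $\sigma_3$, so the plan is to assume $c(G) \le p(G) - 2$ and extract an independent triple of vertices whose degrees sum to less than $n$, contradicting $\sigma_3(G) \ge n$ --- except in a handful of rigid configurations, which is precisely what $\mathcal{F}(n)$ will collect. Since $\sigma_3(G)$ is defined only when $\alpha(G) \ge 3$, the hypothesis is read as vacuous for $\alpha(G) \le 2$; I would dispose of those graphs directly, noting that Lemma~\ref{lem7} settles the subcase $\kappa(G) \ge \alpha(G)$ (there $c(G) = n \ge p(G)$) while the few low-connectivity graphs with $\alpha(G) \le 2$ and large relative length are simply added to $\mathcal{F}(n)$. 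So assume $\alpha(G) \ge 3$ and suppose for contradiction that $p(G) - c(G) \ge 2$. Then a longest cycle $C$ has $c(G) \le n - 2$, so $G - V(C)$ has a nonempty component $H$.

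The core of the argument is a maximality analysis of $C$ together with a longest path. First I would take a longest path $P$ of $G$ having one endpoint $v$ inside $H$; such a path exists because $p(G) > c(G)$ means no longest path can be confined to $C$. Let $x$ be the first vertex of $C$ encountered along $P$ from $v$, and let $x^-, x^+$ be its two neighbours on $C$. Because $P$ is longest, every neighbour of $v$ lies on $P$; and because $C$ is a longest cycle, the initial $v$--$x$ stretch of $P$ cannot be spliced into $C$ to lengthen it. The same crossing observation that, for an ordinary longest path $u_0 \cdots u_l$, forbids simultaneous edges $u_0 u_{i+1}$ and $u_l u_i$ (they would close a cycle through all of $V(P)$) now forbids the analogous chords at $x^-$ and $x^+$. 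Carrying this out shows that $\{v, x^-, x^+\}$ is an independent set whose neighbourhoods are confined to $H$ together with a bounded collection of $C$-segments, so $\sigma_3(G) \ge n$ applies to this triple.

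Adding up, the confinement bounds give $d(v) + d(x^-) + d(x^+) \le n$ with room to spare unless every splicing inequality is tight at once; combined with $\sigma_3(G) \ge n$ this forces equality throughout. Tightness then rigidly determines the global structure: $C$ must decompose into only a few long segments, $H$ must be a single vertex or clique attaching through one prescribed gap, and no further chords are permitted. Enumerating exactly these extremal shapes is the definition of $\mathcal{F}(n)$. I expect this final step to be the main obstacle, since proving that every graph which fails $c(G) \ge p(G) - 1$ is driven into one of finitely many near-extremal families --- and, conversely, that each listed family really does violate the bound --- requires controlling the chord structure along $C$ and the attachments of every component of $G - V(C)$ simultaneously, and it is here that the slack between the clean hypothesis $\sigma_3(G) \ge n + 2$ of Lemma~\ref{lem8} and the weaker $\sigma_3(G) \ge n$ shows up as the genuinely exceptional graphs.
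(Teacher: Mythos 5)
This lemma is not proved in the paper at all: it is quoted verbatim from Enomoto, van den Heuvel, Kaneko and Saito \cite{EHKS}, where it is a substantial theorem whose proof occupies most of that article. Your proposal correctly identifies the standard opening move for relative-length results (assume $p(G)-c(G)\geq 2$, use maximality of a longest cycle and a longest path to produce an independent triple with controlled degree sum, and invoke $\sigma_3(G)\geq n$), but it does not constitute a proof of the statement, for two concrete reasons. First, the entire content of the lemma is the \emph{exact} determination of the six exceptional families $\mathcal{F}_1(n),\dots,\mathcal{F}_6(n)$ --- including the specific joins $K_2\vee(K_a+K_b+K_c)$, $K_3\vee(aK_2+bK_3)$ with $a+b=4$, $K_s\vee(sK_2+K_3)$ and $K_s\vee(s+1)K_2$ --- and your argument replaces this with ``enumerating exactly these extremal shapes is the definition of $\mathcal{F}(n)$.'' Declaring $\mathcal{F}(n)$ to be whatever survives the tightness analysis makes the statement tautological; the theorem is only usable downstream (as in Theorem~\ref{th11}) because the families are explicitly and exhaustively described, and that description is precisely the step you defer. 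Likewise, in the $\alpha(G)\leq 2$ case you propose to ``simply add'' the offending graphs to $\mathcal{F}(n)$, which again assumes the classification rather than proving it.

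Second, the triple you extract is not justified. You take $v$ in a component of $G-V(C)$ and $x^-,x^+$ the $C$-neighbours of the first attachment vertex $x$, but nothing in the sketch shows $x^-x^+\notin E(G)$, nor that $v$ is nonadjacent to $x^-$ and $x^+$ (only that $vx\in E$ is impossible to extend $C$ in the naive way), nor is the inequality $d(v)+d(x^-)+d(x^+)\leq n$ actually derived --- the neighbourhood-confinement and counting argument that yields it is exactly where the case analysis over segments of $C$ and components of $G-V(C)$ lives. So the proposal is a reasonable road map toward the result of \cite{EHKS}, but both the degree-sum contradiction and the classification of the equality cases are asserted rather than carried out, and the latter is the theorem.
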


$\mathcal{F}(n)$ consists six subclasses:\\
$$\mathcal{F}(n)=\mathcal{F}_1(n)\cup\mathcal{F}_2(n)\cup\mathcal{F}_3(n)\cup\mathcal{F}_4(n)\cup\mathcal{F}_5(n)\cup\mathcal{F}_6(n).$$

For any graph $G\in \mathcal{F}(n)$, we have $|V(G)|=n$ and $\sigma_3(G)\geq n$. The subclasses are defined as follows (more details can be found in \cite{EHKS}):

$\mathcal{F}_1(n)$: $G\in \mathcal{F}_1(n)$ if $V(G)=A\cup B$ with $A\cap B=\emptyset$, $G[A]$ and $G[B]$ are hamiltonian or isomorphic to $K_2$, and $e(A,B)=1$.

$\mathcal{F}_2(n)$: $G\in \mathcal{F}_2(n)$ if $V(G)=A\cup B$ with $A\cap B=\{x\}$, $G[A]$ and $G[B]$ are both hamiltonian or both isomorphic to $K_2$, and $e(A\setminus\{x\},B\setminus\{x\})=0$.

$\mathcal{F}_3(n)$: $G\in \mathcal{F}_3(n)$ if $G$ is a 2-connected spanning subgraph of $K_2 \vee (K_a+ K_b+ K_c)$ with $a,b,c\geq 2$ ($n=a+b+c+2$).

$\mathcal{F}_4(n)$: $G\in \mathcal{F}_4(n)$ if $G$ is a 2-connected spanning subgraph of $K_3 \vee (aK_2+ bK_3)$ with $a,b\geq 0$ and $a+b=4$ ($n=2a+3b+3$, $11\leq n\leq 15$).

$\mathcal{F}_5(n)$: $G\in \mathcal{F}_5(n)$ if $G$ is a 2-connected spanning subgraph of $K_s \vee (sK_2+K_3)$ with $s\geq 4$ ($n=3s+3$).

$\mathcal{F}_6(n)$: $G\in \mathcal{F}_6(n)$ if $G$ is a 2-connected spanning subgraph of $K_s \vee (s+1)K_2$ with $s\geq 4$ ($n=3s+2$).

\begin{theo}\label{th11}
Let $G$ be a connected graph with order $n\leq 3\delta(G)$. If $L(G)\leq 2\delta(G)-1$, then $c(G)\geq n-1$.
\end{theo}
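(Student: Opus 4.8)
The plan is to reduce the statement to Lemma~\ref{lem10} and then to rule out the exceptional family $\mathcal{F}(n)$ using the two hypotheses. First, every spanning tree has at least two leaves, so $L(G)\geq 2$ forces $2\delta-1\geq 2$, whence $\delta\geq 2$ and $n\geq\delta+1\geq 3$. By Lemma~\ref{lem6} the graph $G$ is $2$-connected; if $\alpha(G)\leq 2$ then $\kappa(G)\geq 2\geq\alpha(G)$, so Lemma~\ref{lem7} makes $G$ hamiltonian and $c(G)=n$. We may thus assume $\alpha(G)\geq 3$. Since $L(G)\leq 2\delta-1$ is the same as $\delta\geq(L(G)+1)/2$, Theorem~\ref{th2} shows that $G$ is traceable, so $p(G)=n$; and any three independent vertices have degree sum at least $3\delta\geq n$, so $\sigma_3(G)\geq n$. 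Applying Lemma~\ref{lem10} we obtain either $c(G)\geq p(G)-1=n-1$, in which case we are done, or $G\in\mathcal{F}(n)$. It remains to show that the hypotheses exclude membership in $\mathcal{F}(n)$.

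The classes $\mathcal{F}_1(n)$ and $\mathcal{F}_2(n)$ are disposed of at once: a graph in $\mathcal{F}_1(n)$ has $e(A,B)=1$ and hence a cut edge, while in $\mathcal{F}_2(n)$ the common vertex $x$ is a cut vertex (each side retains at least one further vertex). In both cases $G$ fails to be $2$-connected, contradicting Lemma~\ref{lem6}.

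For $\mathcal{F}_3(n)$ through $\mathcal{F}_6(n)$ I will use that each member is a spanning subgraph of $K_t\vee H$, where the $t$ apex vertices form $K_t$ ($t=2,3,s,s$ respectively) and $H$ is a disjoint union of cliques. A vertex in a smallest block $K_m$ of $H$ has degree at most $(m-1)+t$, so $\delta\leq m-1+t$; substituting into $n\leq 3\delta$ constrains the parameters so tightly that $\delta$ is forced to equal this upper bound, which in turn forces every vertex of a smallest block to be joined to its whole block and to all $t$ apexes. One can then choose an apex $u$ together with such a block vertex $x$ (adjacent to all apexes, and with $u\sim x$) so that $\{u,x\}$ dominates $G$; the associated star-like spanning tree has at least $n-2$ leaves, so $L(G)\geq n-2$. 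Comparing with $L(G)\leq 2\delta-1$ gives a contradiction for $\mathcal{F}_5(n)$ and $\mathcal{F}_6(n)$. For $\mathcal{F}_4(n)$, where $11\leq n\leq 15$, the two orders $n=13,14$ require $\delta\geq 5$ yet contain a $K_2$-block whose vertices have degree at most $4$, an immediate contradiction, and the remaining orders $n\in\{11,12,15\}$ are excluded by the same leaf count.

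The main obstacle is $\mathcal{F}_3(n)$ and the mixed case of $\mathcal{F}_4(n)$ that contains both $K_2$- and $K_3$-blocks. There a single apex need not be adjacent to every vertex of a larger block, so $\{u,x\}$ may fail to dominate and one is forced to add a third internal vertex, yielding only the weaker bound $L(G)\geq n-3$. I will then have to check that $n-3>2\delta-1$ still holds; this is where the constraint bites, since in precisely these subcases one has $n=3\delta$, so that $n-3=3\delta-3>2\delta-1$ is equivalent to $\delta>2$, which is true. The remaining care is bookkeeping: extracting the forced adjacencies from the degree equality $\delta=m-1+t$, and verifying that the chosen dominating vertices can be taken adjacent to one another so that the spanning tree is genuinely connected with the stated number of leaves.
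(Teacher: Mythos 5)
Your proposal is correct and follows the same skeleton as the paper's proof: Lemma~\ref{lem6} for $2$-connectivity, Lemma~\ref{lem7} to dispose of $\alpha(G)\leq 2$, then Theorem~\ref{th2} (traceability) combined with Lemma~\ref{lem10} and $\sigma_3(G)\geq 3\delta\geq n$ to reduce everything to excluding $\mathcal{F}(n)$, with $\mathcal{F}_1(n)$ and $\mathcal{F}_2(n)$ killed by $2$-connectivity and the remaining classes killed by the same degree count that forces each block of $H$ to have size at least $\delta(G)-t+1$ and hence pins $n$ at $3\delta(G)$ or $3\delta(G)-1$. Where you genuinely diverge is the endgame: the paper exhibits, class by class, an explicit subtree with at least $2\delta(G)$ leaves (e.g.\ a spanning star of $K_1\vee\delta K_2$ inside $\mathcal{F}_6(n)$, or the tree built from $K_2\vee(K_b+K_c)$ plus one vertex of $K_a$ in $\mathcal{F}_3(n)$), and for $\mathcal{F}_4(n)$ it additionally splits off $n\leq 3\delta(G)-2$ via Lemma~\ref{lem8}; you instead run a uniform argument, using the forced adjacencies to produce a connected dominating set of size $2$ or $3$ and concluding $L(G)\geq n-2$ or $n-3>2\delta(G)-1$. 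Your version is more systematic and avoids both Lemma~\ref{lem8} and the separate $\delta(G)=2$ appeal to Lemma~\ref{lem9} (for $\delta(G)=2$ one has $n\leq 6$, so only $\mathcal{F}_1(n),\mathcal{F}_2(n)$ can occur), at the cost of the domination bookkeeping you flag: in $\mathcal{F}_5(n)$ and the mixed $\mathcal{F}_4(n)$ case the vertices of a non-smallest block may each miss one apex, so the single apex $u$ must be chosen (or replaced by two apexes) so that the internal vertices still dominate and induce a connected subgraph; this always works since each such vertex misses at most one of the $t\geq 2$ apexes, and the resulting bound $n-3>2\delta(G)-1$ holds because $n\leq 3\delta(G)$ forces $\delta(G)\geq 3$ in every one of these classes. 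With that detail filled in, your argument is a complete and valid alternative finish.
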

\begin{proof}
Let $G$ be a connected graph with order $n\leq 3\delta(G)$ and $L(G)\leq 2\delta(G)-1$. By Lemma \ref{lem6}, $G$ is 2-connected. If $\alpha(G)\leq 2$, by Lemma \ref{lem7}, then $G$ is hamiltonian and hence $c(G)\geq n-1$. Clearly, $\delta(G)\neq 1$. By Lemma \ref{lem9}, the result holds true for $\delta(G)= 2$. Now, it suffices to consider the case of $\alpha(G)\geq 3$ and $\delta(G)=\delta
\geq 3$. Note that $G$ is a connected graph with order $n>3$, by Theorem \ref{th2} and Lemma \ref{lem10}, $c(G)\geq p(G)-1=n-1$ or $G\in\mathcal{F}(n)$. Suppose to the contrary that $G\in \mathcal{F}(n)$.

Recall that $G$ is 2-connected. This implies that $G\notin \mathcal{F}_1(n)\cup \mathcal{F}_2(n)$. First suppose $G\in\mathcal{F}_3(n)$. For any vertex $x$ of $V(K_a)$, $d_{K_a}(x)\geq \delta-2$ in $G$ and hence $|V(K_a)|\geq \delta-1$. Similarly, $|V(K_b)|\geq \delta-1$ and $|V(K_c)|\geq \delta-1$. Then
$$|V(K_a)|+|V(K_b)|+|V(K_c)|+2\geq 3(\delta-1)+2=3\delta -1.$$
It implies that either $n=3\delta-1$ or $n=3 \delta$. For the first case, $|V(K_a)|=|V(K_b)|=|V(K_c)|=\delta-1$ and hence $G=K_2 \vee (K_a+K_b+K_c)$. It can easily be shown that $G$ contains a spanning tree with leaf number at least $2\delta$, a contradiction. For the second case, exactly one of $|V(K_a)|$, $|V(K_a)|$ and $|V(K_a)|$ is equal to $\delta$, and the rest are equal to $\delta-1$. Without loss of generality, suppose that $|V(K_a)|=\delta$ and $|V(K_b)|=|V(K_c)|=\delta-1$. Then $G[G-V(K_a)]=K_2 \vee (K_b+ K_c)$. The subgraph induced by the vertex set of $G[G-V(K_a)]$ with one vertex of $V(K_a)$ has a spanning tree with leaf number $2\delta$, contradicting $L(G)\leq 2\delta-1$. Thus $G\notin \mathcal{F}_3(n)$.

Next assume that $G\in \mathcal{F}_4(n)$. For $n\leq 3\delta-2$, by Lemmas \ref{lem6} and \ref{lem8}, $c(G)\geq p(G)-1$ since $n\leq \sigma_3(G)-2$. For $3\delta-1\leq n\leq 3\delta$, $\delta=4$ or $5$ since $11\leq n\leq 15$. Note that $a+b=4$ and $n=a+b+c+2$. If $\delta=4$, $n=11,a=4,b=0$ or $n=12,a=3,b=1$. It is easy to check that $L(G)\geq 8>2\delta-1$ in both cases. If $\delta=5$, $n=14,a=1,b=3$ or $n=15,a=0,b=4$. Since $\delta=5$, the first case is not allowed. For $n=15,a=0,b=4$, we have $L(G)\geq 10>2\delta-1$. Thus $G\notin \mathcal{F}_4(n)$.

Now assume that $G\in \mathcal{F}_5(n)$. For any vertex $x$ of $V(sK_2)$, $d_{K_s}(x)\geq \delta-1$ in $G$ and hence $|V(K_s)|\geq \delta-1$. Then $n=3s+3\geq 3(\delta-1)+3=3\delta$. Since $n\leq 3\delta$, then $s=\delta-1$. It implies that $G[V(K_s)\cup V(sK_2)]$ contains $(\delta-1)K_1\vee (\delta-1)K_2$ as a subgraph. Note that the subgraph induced by $V(K_3)$ contains no isolated vertex in $G$. Then we can split this into two cases. For $G[V(K_3)]=K_3$, it is easy to check that $G$ contains a spanning tree with leaf number at least $2\delta$, a contradiction. For $G[V(K_3)]=P_3$, let $w_1,w_2\in V(G[V(K_3)])$ and $d_{G[V(K_3)]}(w_1)=d_{G[V(K_3)]}(w_2)=1$. Then $d_{K_s}(w_1)= d_{K_s}(w_2)=\delta -1$. We also obtain $G$ has a spanning tree with leaf number at least $2\delta$, contradicting $L(G)\leq 2\delta-1$. Thus $G\notin \mathcal{F}_5(n)$.

It follows that $G\in \mathcal{F}_6(n)$. Since $n=3s+2\leq 3\delta$, we have $s\leq \delta-1$. For any vertex $x$ of $V((s+1)K_2)$, $d_{K_s}(x)\geq \delta-1$ in $G$ since $d(x)\geq \delta$. Then $s=\delta-1$ and $n=3s+2=3\delta-1$. Further, $G$ contains $K_1\vee \delta K_2$ as a subgraph. Thus, $G$ has a spanning tree with leaf number at least $2\delta$, a contradiction. This completes the proof of Theorem \ref{th11}.
\end{proof}

Before giving the proof of the main theorem, we prove a conclusion about regular graphs.

\begin{lem}\label{lem12}\cite{H}
Every 2-connected $k$-regular ($k\geq 3$) graph of order at most $3k + 3$ is hamiltonian except the Petersen graph $P$ and the graph obtained from $P$ by replacing one vertex of $P$ by a triangle.
\end{lem}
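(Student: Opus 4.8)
The plan is to argue by contradiction via a longest-cycle analysis, the standard Jackson--Hilbig route. Suppose $G$ is a $2$-connected $k$-regular graph ($k\geq 3$) with $n=|V(G)|\leq 3k+3$ that is not hamiltonian; the goal is to force $G$ to be the Petersen graph $P$ or the graph obtained from $P$ by expanding one vertex into a triangle. As a preliminary reduction I would clear the dense cases with the Chv\'atal--Erd\H{o}s criterion (Lemma~\ref{lem7}): since $G$ is $2$-connected and is not $K_2$, whenever $\kappa(G)\geq\alpha(G)$ the graph is already hamiltonian. Hence I may assume $\alpha(G)>\kappa(G)\geq 2$, so that $G$ carries a comparatively large independent set, which is exactly the regime the exceptional graphs inhabit (for $P$ one has $\kappa=3<4=\alpha$).

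For the main argument, let $C$ be a longest cycle of $G$ and suppose it is not spanning, so some component $H$ of $G-V(C)$ is nonempty; write $h=|V(H)|$. Fix a cyclic orientation of $C$ and let $A=N(V(H))\cap V(C)=\{x_1,\dots,x_t\}$ be the attachment vertices in cyclic order; by $2$-connectivity $t\geq 2$. I would then invoke the classical longest-cycle facts: for any two attachments joined by an $H$-path of length $\ell$ (a path whose interior lies in $H$), each of the two $C$-arcs between them has at least $\ell$ edges, for otherwise rerouting through $H$ would yield a cycle longer than $C$. In particular consecutive attachments are separated by gaps of length at least two, and the successors $x_1^{+},\dots,x_t^{+}$, together with a vertex of $H$, form an independent set that is moreover ``non-interfering'' with the neighbourhoods along $C$ in the usual insertion sense.

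Feeding $k$-regularity into this structure is the quantitative heart. Counting the edges incident with $V(H)$ and with the successor and predecessor sets along $C$, and using that each gap must be long enough to absorb the relevant $H$-paths, yields a lower bound of the shape $|V(C)|\geq 2k$, or more precisely an inequality relating $|V(C)|$, $t$ and $h$. Since $n\geq|V(C)|+h$ and $n\leq 3k+3$, this pins $h$, the number of attachments $t$, and the gap lengths down to a short finite list of near-extremal configurations. I expect most of these to admit an explicit rerouting that lengthens $C$, contradicting maximality, and to survive only when $k=3$, $h=1$, and $C$ spans all but the single external vertex, i.e. $c(G)=n-1$ (which is precisely the tight regime matching Theorem~\ref{th4}).

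The hard part, and what makes the statement genuinely delicate rather than a clean inequality, is the endgame: once the counting collapses everything to a handful of tight cases, one must verify by direct structural inspection that the only $2$-connected non-hamiltonian survivors are $P$ and its triangle-expansion. This is where one must exploit the girth-$5$, vertex-transitive rigidity of the Petersen graph to rule out a longer cycle while confirming non-hamiltonicity, and take care that every tight configuration is accounted for and none is misidentified. This enumeration---rather than the asymptotic counting---is the true obstacle, and it is the reason the theorem must carry named exceptions instead of being a uniform bound.
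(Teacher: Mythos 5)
The paper does not prove this statement at all: Lemma~\ref{lem12} is quoted verbatim from Hilbig's thesis \cite{H} (which in turn extends Jackson's theorem that every $2$-connected $k$-regular graph on at most $3k$ vertices is hamiltonian), so there is no in-paper argument to compare yours against. Judged on its own terms, your proposal correctly identifies the standard strategy --- Chv\'atal--Erd\H{o}s to clear the dense cases, then a longest-cycle analysis with attachment vertices, gap lower bounds, and a count driven by $k$-regularity --- and this is indeed the general route Jackson and Hilbig take.

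However, as a proof it has a genuine gap, and you essentially concede it yourself: the two load-bearing steps are only asserted. First, the ``quantitative heart'' is never carried out --- you never actually derive an inequality relating $|V(C)|$, the number of attachments $t$, the component size $h$, and $k$, and the claim that $n\leq 3k+3$ then ``pins everything down to a short finite list'' is exactly the part of Jackson's and Hilbig's work that occupies dozens of pages; it requires much more than the elementary crossing/insertion lemmas you cite (in particular, hopping-lemma-style augmentation arguments to handle components attached at few vertices, and a careful treatment of the case $\kappa(G)=2$). Second, the endgame enumeration that isolates $P$ and $P^{\triangle}$ as the only survivors is flagged as ``the true obstacle'' but not performed, so the exceptional graphs are never actually produced by the argument --- they are only predicted to appear. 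A correct write-up would either reproduce Jackson's counting machinery in full or reduce to his theorem for $n\leq 3k$ and then handle $3k<n\leq 3k+3$ separately; as it stands the proposal is a plausible roadmap rather than a proof.
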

Denote by $P^{\triangle}$ be the graph obtained from $P$ by replacing one vertex of $P$ by a triangle.

\begin{figure}[h]
  \centering
  \subfigure
  {
      \begin{minipage}[b]{.4\linewidth}
      \centering
      \includegraphics[scale=0.6]{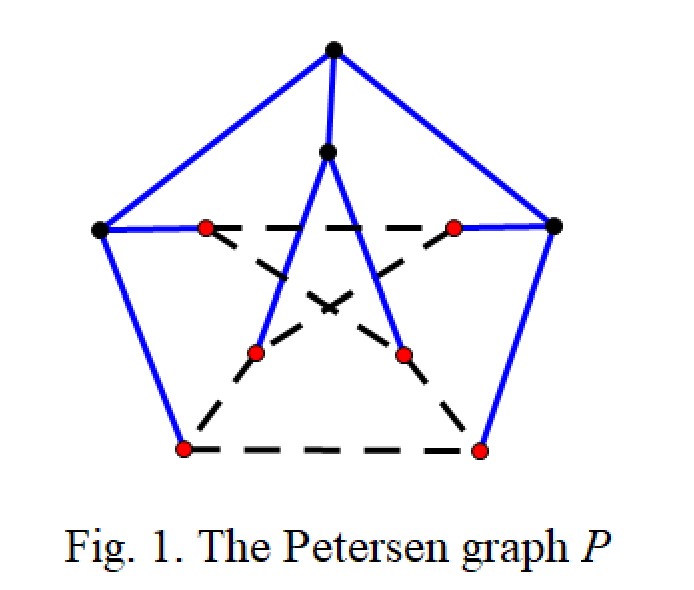}
      \end{minipage}
  }
  \subfigure
  {
      \begin{minipage}[b]{.4\linewidth}
      \centering
      \includegraphics[scale=0.6]{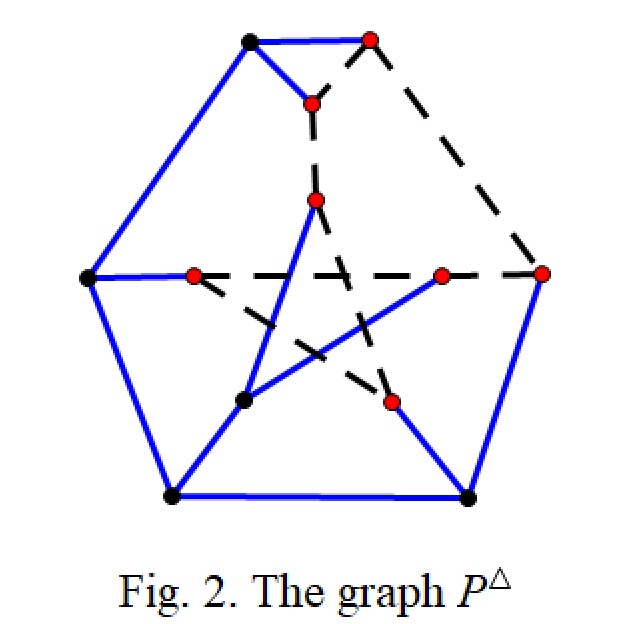}
      \end{minipage}
  }
\end{figure}

\begin{theo}\label{th13}
Let $G$ be a $k$-regular connected graph. If $L(G)\leq 2k-1$, then $G$ is hamiltonian and the condition cannot be relaxed.
\end{theo}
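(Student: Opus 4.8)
The plan is to reduce Theorem~\ref{th13} to the machinery already assembled, exploiting the fact that a $k$-regular graph has $n \geq k+1$ and that regularity pins down the minimum degree exactly as $\delta(G)=k$. First I would handle the small cases and the connectivity: by Lemma~\ref{lem6}, the hypothesis $L(G)\leq 2k-1=2\delta(G)-1$ immediately gives that $G$ is $2$-connected. The case $k\leq 2$ is degenerate (a $2$-regular connected graph is a cycle, hence hamiltonian; $k=1$ is excluded since $\delta\neq 1$ in a graph admitting the leaf bound with $n>2$), so I would assume $k\geq 3$ throughout.

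The central idea is to combine Lemma~\ref{lem5} with Lemma~\ref{lem12}. By Lemma~\ref{lem5}, the order satisfies $n\leq \max\{2k+6,\,3k\}$. Since we are in the regime $k\geq 3$, I would observe that $\max\{2k+6,3k\}\leq 3k+3$ for all $k\geq 3$ (indeed $2k+6\leq 3k+3$ iff $k\geq 3$, and $3k\leq 3k+3$ always). Thus every graph satisfying the hypothesis is a $2$-connected $k$-regular graph of order at most $3k+3$, which is precisely the setting of Lemma~\ref{lem12}. That lemma then forces $G$ to be hamiltonian unless $G\in\{P,\,P^{\triangle}\}$, the Petersen graph or its triangle-expansion.

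The heart of the argument, and the step I expect to be the main obstacle, is ruling out the two exceptional graphs by a direct leaf-number computation. The Petersen graph $P$ is $3$-regular, so here $k=3$ and the hypothesis demands $L(P)\leq 2k-1=5$; I would exhibit a spanning tree of $P$ with at least $6$ leaves (a spanning tree on $10$ vertices generically has many leaves, and a careful choice of a depth-first or breadth-first spanning tree of $P$ yields leaf number well above $5$), contradicting $L(G)\leq 2k-1$ and hence excluding $P$. For $P^{\triangle}$, which replaces one vertex of $P$ by a triangle and is again $3$-regular on $12$ vertices, the same strategy applies: I would construct a spanning tree with at least $6$ leaves to violate the bound $L\leq 5$. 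The delicate part is verifying these spanning-tree leaf counts cleanly rather than by brute inspection; I would argue via the maximum-leaf / connected-domination duality, noting that $L(G)=n-\text{(size of a minimum connected dominating set)}$, so it suffices to show that no connected dominating set of $P$ (respectively $P^{\triangle}$) is small enough to keep $L(G)\leq 5$.

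Finally, to show the condition cannot be relaxed, I would supply a $k$-regular, non-hamiltonian graph attaining $L(G)=2k$ (one more than the threshold), mirroring the sharpness class $\mathcal{F}_2$ appearing in Theorem~\ref{th3}; the natural candidate is built from two hamiltonian or complete blocks glued at a cut structure so as to force both non-hamiltonicity and leaf number exactly $2k$. With the exceptional cases eliminated and the sharpness example in hand, the proof of Theorem~\ref{th13} is complete.
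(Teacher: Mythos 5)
Your proposal follows essentially the same route as the paper: Lemma~\ref{lem5} gives $n\leq\max\{2k+6,3k\}\leq 3k+3$ for $k\geq 3$, Lemma~\ref{lem6} gives $2$-connectivity, Lemma~\ref{lem12} then yields hamiltonicity, and the exceptions $P$ and $P^{\triangle}$ are excluded because $L(P)=6$ and $L(P^{\triangle})=7$ both exceed $2k-1=5$. The only divergence is the sharpness example: you propose building a new $k$-regular non-hamiltonian graph from glued blocks, but the Petersen graph itself, whose leaf number $L(P)=6=2k$ you have already computed, is exactly the witness the paper uses, so no further construction is needed.
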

\begin{proof}
It is easy to verify that $L(P)= 6$ and $L(P^{\triangle})= 7$ (see Fig 1 and Fig 2). The Petersen graph $P$ is non-hamiltonian but satisfies $L(P)=6=2k$, so the condition cannot be relaxed.

Let $G$ be a $k$-regular connected graph of order $n$ with $L(G)\leq 2k-1$. Since $L(G)\geq 2$, then $k\geq 2$. Clearly, $G$ is hamiltonian when $k=2$. Next suppose that $k\geq 3$. By Lemma \ref{lem5}, we have $n\leq \max \{2k+6, 3k\}$. Note that $3k+3\geq \max \{2k+6, 3k\}$ when $k\geq 3$. By Lemmas \ref{lem6} and \ref{lem12}, $G$ is hamiltonian. This completes the proof of Theorem \ref{th13}.
\end{proof}

The following lemmas play the key role in the proof of Theorem \ref{th4}.

\begin{lem}\label{lem14}
Let $G$ be a connected graph with $L(G)\leq 2\delta(G)-1$. For $\delta(G)\geq 3$, if there is one vertex $x\in V(G)$ with degree $2\delta(G)-1$, then $|V(G)\setminus N[x]|\leq 2$.
\end{lem}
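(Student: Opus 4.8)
The plan is to argue by contradiction: assuming $|V(G)\setminus N[x]|\ge 3$, I would construct a spanning tree of $G$ with at least $2\delta$ leaves, contradicting $L(G)\le 2\delta-1$. Write $\delta=\delta(G)$, $R=V(G)\setminus N[x]$ and $m=|R|$. Since $d(x)=2\delta-1$ we have $|N[x]|=2\delta$, so $n=2\delta+m$ and a spanning tree with at least $2\delta$ leaves is exactly one with at most $m$ internal (non-leaf) vertices. By Lemma \ref{lem6}, $G$ is $2$-connected, so $H:=G-x=G[N(x)\cup R]$ is connected; note also that for each $r\in R$ we have $N(r)\subseteq N(x)\cup(R\setminus\{r\})$ and $d(r)\ge\delta$.

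The construction I would use is a controlled growth. Start from the star $T_0$ with centre $x$ and leaf set $N(x)$; this already has $2\delta-1$ leaves, and $N(x)$ is ``visited''. Then grow $T_0$ into a spanning tree of $G$ by repeatedly adding one edge from a visited vertex to an unvisited vertex of $R$; connectivity of $H$ guarantees the frontier is nonempty until $R$ is exhausted. Each such addition changes the number of leaves by $+1$ minus $1$ if the chosen parent was a leaf at that moment, hence never decreases it. Consequently the final tree has at least $2\delta-1$ leaves automatically, and it has at least $2\delta$ leaves as soon as a single addition is \emph{free}, i.e.\ attaches a new vertex of $R$ to a vertex that is already internal.

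It therefore suffices to produce one free addition, and here I would use a counting dichotomy. Suppose no free addition were available; this forces both (A) every $v\in N(x)$ has at most one neighbour in $R$, and (B) every $r\in R$ has at most one neighbour in $R$. From (B), each $r$ satisfies $|N(r)\cap N(x)|\ge\delta-1$, so $e(R,N(x))\ge m(\delta-1)$; from (A), $e(R,N(x))\le|N(x)|=2\delta-1$. Combining,
$$ m(\delta-1)\le 2\delta-1, $$
so $m\le 2+\tfrac{1}{\delta-1}\le 2$ because $\delta\ge3$ (this is exactly where the hypothesis $\delta\ge3$ enters), contradicting $m\ge3$. Hence some $v\in N(x)$ has two neighbours $r,r'\in R$, or some $r_0\in R$ has two neighbours $r_1,r_2\in R$.

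The first alternative is immediate: since $v\in N(x)$ is visited from the very start, I simply route both $r$ and $r'$ to $v$ during the growth (the edges $vr,vr'$ stay available because $v$ remains visited), and whichever of $r,r'$ is attached second is a free addition, giving $2\delta$ leaves. I expect the main obstacle to be the second alternative, where the common neighbour $r_0$ lies in $R$ and so is \emph{not} available at the start: to make the attachment of $r_1,r_2$ through $r_0$ free one must guarantee that $r_0$ becomes internal (and is reached from $N(x)$) before both $r_1$ and $r_2$ are attached, and that adding $x$ at the end does not consume the gained leaf. I would handle this by first building a spanning tree of $H$ in which $r_1,r_2$ are leaves with common parent $r_0$ (using the $2$-connectivity of $G$, and the fact that under (A) the set $R$ is densely joined internally while sending at most $2\delta-1$ edges to $N(x)$, to control reachability), and then attaching $x$ to an internal vertex of $N(x)$; should the tight regime $\delta\in\{3,4,5\}$ resist this, Lemma \ref{lem5} bounds $n$ by $\max\{2\delta+6,3\delta\}$ and hence $m$, reducing the remaining possibilities to a finite check.
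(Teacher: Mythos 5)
Your overall strategy is the same as the paper's: assume $|R|\ge 3$ and derive a subtree of $G$ with at least $2\delta$ leaves (which extends to a spanning tree with at least as many leaves), driven by an edge count between $N(x)$ and $R$. Your first alternative (some $v\in N(x)$ with two neighbours in $R$) and your counting under (A) and (B) are both correct. The genuine gap is the second horn of your dichotomy, which you yourself flag as ``the main obstacle'' and never close: you need the implication that if some $r_0\in R$ has two neighbours $r_1,r_2\in R$, then $G$ contains a tree with $2\delta$ leaves. Your proposed fix --- build a spanning tree of $H=G-x$ in which $r_1,r_2$ are leaves below $r_0$, then attach $x$ --- abandons the star at $x$ and with it the $2\delta-1$ guaranteed leaves in $N(x)$; a spanning tree of $H$ with a pendant $x$ attached has no reason to have $2\delta$ leaves, and the ``free addition'' bookkeeping does not transfer to that construction. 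The fallback to a finite check via Lemma \ref{lem5} is not carried out either, so the proof is incomplete exactly where the real work lies.

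The gap closes by splitting on whether $r_0$ has a neighbour in $N(x)$, which is essentially what the paper does. If $r_0$ has a neighbour $v\in N(x)$, the star at $x$ together with the edges $vr_0$, $r_0r_1$, $r_0r_2$ has leaf set $(N(x)\setminus\{v\})\cup\{r_1,r_2\}$, i.e.\ $2\delta$ leaves. If $r_0$ has no neighbour in $N(x)$, then all of its at least $\delta$ neighbours lie in $R$; take a shortest path in $G-x$ from $N(x)$ to $r_0$ (its internal vertices lie in $R$, and by minimality $r_0$ is adjacent to no path vertex other than its predecessor), append it to the star, and hang the remaining at least $\delta-1$ neighbours of $r_0$ as children of $r_0$; this gives at least $(2\delta-2)+(\delta-1)=3\delta-3\ge 2\delta$ leaves since $\delta\ge 3$. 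With that case supplied your argument is complete and closely parallels the paper's, which instead shows directly that every vertex of $R$ with a neighbour in $N(x)$ has at most one neighbour in $R$, uses $2$-connectivity to find two such vertices $y_1,y_2$ absorbing at least $2(\delta-1)$ of the at most $2\delta-1$ edges leaving $N(x)$, and concludes that a third vertex $y_3$ has all its neighbours outside $N[x]$, yielding the same $3\delta-3$ tree.
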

\begin{proof}
Let $G$ be a connected graph with $L(G)\leq 2\delta(G)-1$. Since $L(G)\leq 2\delta(G)-1$, each vertex of $N(x)$ has at most one neighbour in $V(G)\setminus N[x]$. By Lemma \ref{lem6}, $G$ is 2-connected. Then there are two vertices $y_1, y_2 \in V(G)\setminus N[x]$ have neighbors in $N(x)$. Similarly, each vertex of $\{y_1,y_2\}$ has at most one neighbour in $V(G)\setminus N[x]$ and hence at least $\delta(G)-1$ neighbors in $N(x)$. Suppose that $|V(G)\setminus N[x]|\geq 3$. There exists one vertex $y_3 \in V(G)\setminus (N[x]\cup \{y_1,y_2\})$ and $y_3$ is adjacent to $y_1$ or $y_2$. Clearly, $N(y_3)\cap N(x)=\emptyset$. Without loss of generality, assume that $y_3$ is adjacent to $y_1$. $G[N[x]\cup \{y_1\}\cup N(y_3)]$ contains a tree with leaf number $3\delta(G)-3$. Further, since $\delta(G)\geq 3$, we have $3\delta(G)-3>2\delta(G)-1$, a contradiction. This completes the proof of Lemma \ref{lem14}.
\end{proof}

\begin{lem}\label{lem15}\cite{D}
Let $G$ be a 2-connected graph of order $n$ and let $C$ be a longest cycle in $G$. Then $|V(C)|\geq \min\{n,2\delta(G)\}$.
\end{lem}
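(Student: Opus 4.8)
The statement is Dirac's classical lower bound on the circumference, so the plan is to take a longest cycle $C$ and exploit its maximality. If a longest cycle $C$ is Hamiltonian then $|V(C)| = n \geq \min\{n, 2\delta(G)\}$ and there is nothing to prove; thus I would assume $C$ is not spanning and instead establish the sharper inequality $|V(C)| \geq 2\delta(G)$. To this end, pick a component $H$ of $G - V(C)$. Since $G$ is 2-connected, $H$ is joined to at least two vertices of $C$, so after fixing an orientation of $C$ we may list the attachment vertices $w_1, \ldots, w_r$ (with $r \geq 2$) in cyclic order.

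The engine is a rerouting argument driven by the maximality of $C$. If $w_i$ and $w_j$ are attachments and $R$ is a path between them whose interior lies in $H$, then replacing the arc of $C$ from $w_i$ to $w_j$ by $R$ produces a cycle; since $C$ is longest, that arc is at least as long as $R$, which already forces every arc between consecutive attachments to contain an internal vertex. More importantly, the same maximality blocks the successors $w_i^+$ on $C$: no $w_i^+$ can be adjacent to $H$, and crossing two such detours shows the $w_i^+$ enjoy prescribed non-adjacencies. I would make this quantitative by choosing a longest path $Q = z_0 z_1 \cdots z_m$ with $z_0 \in V(C)$ and $z_1, \ldots, z_m \in V(H)$, and then applying rotations at the free end $z_m$: each neighbour of $z_m$ on $C$ contributes a blocked successor, while each neighbour of $z_m$ inside $H$ supplies a new endpoint to which the same analysis applies.

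Collecting these constraints, the $\geq \delta(G)$ neighbours of $z_m$ and the blocked successor vertices on $C$ land in disjoint subsets of $V(C)$, and counting them yields $|V(C)| \geq 2\delta(G)$. The main obstacle is precisely this final counting step: converting ``$C$ is longest'' together with ``every degree is at least $\delta(G)$'' into the clean number $2\delta(G)$. Feeding the degree hypothesis in through $H$ rather than through the cycle directly is essential — arranging the endpoint neighbourhoods of a longest path by itself does \emph{not} force a long cycle — and the delicate part is verifying that the two families of counted vertices are genuinely disjoint, i.e.\ that no accidental adjacency permits a reroute into a longer cycle. Everything before that is routine setup; this bookkeeping is where the real work lies, and in practice one simply invokes \cite{D} for it.
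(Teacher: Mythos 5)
The paper offers no proof of this lemma: it is Dirac's classical circumference bound, imported verbatim from \cite{D}, so there is no in-paper argument to compare yours against. Judged on its own terms, your proposal correctly identifies the standard machinery --- a non-spanning longest cycle $C$, a component $H$ of $G-V(C)$, attachment vertices $w_1,\dots,w_r$ with $r\ge 2$ supplied by $2$-connectivity, the blocked successors $w_i^+$, and rotations of a longest path ending in $H$ --- and the individual maximality observations you state (consecutive attachments are non-adjacent on $C$, no $w_i^+$ is adjacent to $H$, and suitable pairs of successors are non-adjacent) are all true and provable exactly as you indicate.

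The gap is that the lemma is never actually derived from these observations. The decisive step --- splitting the at least $\delta(G)$ neighbours of the free endpoint $z_m$ between $V(H)$ and $V(C)$, converting the neighbours inside $H$ into extra length of the path $Q$, converting the neighbours on $C$ into pairwise-distinct blocked successors, verifying that the resulting families of vertices really are disjoint, and summing arc lengths to reach $|V(C)|\ge 2\delta(G)$ --- is exactly the content of the theorem, and you explicitly punt on it (``in practice one simply invokes \cite{D}''). In particular, the case where $z_m$ has many neighbours inside $H$, so that few blocked successors are produced on $C$, is not addressed; Dirac's argument handles it by trading path length in $H$ against arc length on $C$, a mechanism your outline names but does not set up quantitatively. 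As written, the proposal is a correct road map that stops where the proof begins; since the paper itself simply cites \cite{D}, resting on that citation is acceptable, but the sketch preceding it should not be mistaken for a proof.
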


\begin{lem}\label{lem16}\cite{GW}
Let $G$ be a connected graph of order $n$.\\
(1) If $\delta(G)\geq 4$, then $L(G)\geq \frac{2n+8}{5}$.\\
(2) If $\delta(G)\geq 5$, then $L(G)\geq \frac{n}{2}+2$.
\end{lem}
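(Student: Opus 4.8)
The plan is to prove both bounds constructively, by the tree-growing (\emph{expansion}) method of Kleitman and West that underlies the Griggs--Wu theorem. Since $L(G)$ is the maximum number of leaves over all spanning trees, it suffices to exhibit a single spanning tree of $G$ with at least the stated number of leaves. Starting from a seed vertex, I grow a subtree $T\subseteq G$ and repeatedly apply an expansion: pick a leaf $x$ of $T$ having at least one neighbour in $V(G)\setminus V(T)$, and attach to $T$ all edges from $x$ to $V(G)\setminus V(T)$. If $x$ has $m$ such outside neighbours, then $x$ becomes internal and $m$ fresh leaves appear, so the leaf set $L$ and internal set $I$ of $T$ change by $|L|\mapsto|L|+m-1$ and $|I|\mapsto|I|+1$ (the quantity we are bounding is the final $|L|$). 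A second, purely profitable, move is also allowed: if some $u\in V(G)\setminus V(T)$ is adjacent to an \emph{internal} vertex of $T$, I simply hang $u$ on that vertex as a new leaf, so $|L|\mapsto|L|+1$ with $|I|$ unchanged. The process ends when $V(T)=V(G)$.

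The bookkeeping is run through a potential. For $\delta(G)\ge 4$ the target $|L|\ge\frac{2n+8}{5}$ is, using $n=|L|+|I|$, equivalent to $3|L|-2|I|\ge 8$, so I set $\Phi=3|L|-2|I|$ and aim to finish with $\Phi\ge 8$; for $\delta(G)\ge 5$ the target $|L|\ge\frac n2+2$ is equivalent to $|L|-|I|\ge 4$, so there I use $\Phi=|L|-|I|$ with final target $\Phi\ge 4$. Under these potentials the purely profitable move always raises $\Phi$, and an expansion at a leaf with $m\ge 2$ outside neighbours is non-decreasing (strictly increasing once $m$ is large enough relative to the degree). Choosing the seed to be a vertex of degree $\ge\delta$ and performing its first expansion already brings $\Phi$ up to at least the required constant ($3\delta-2\ge 10$ in the first case, $\delta-1\ge 4$ in the second), which is exactly what produces the additive constants $+8$ and $+4$; the tightness of $\delta-1=4$ at $\delta=5$ reflects the sharpness of the second bound.

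The main obstacle is the \emph{endgame}: configurations in which $V(T)\ne V(G)$ but the only available expansions are at leaves with exactly one outside neighbour ($m=1$), each of which \emph{decreases} $\Phi$. The resolution, and the delicate part of the argument, is to show that such single-neighbour expansions are self-correcting under the minimum-degree hypothesis. In such a stalled configuration no outside vertex meets an internal vertex of $T$ and no leaf has two outside neighbours, so every outside vertex adjacent to $T$ meets $T$ only in leaves. If $x$ is a leaf whose unique outside neighbour is $u$, then $u$ has degree $\ge\delta$; provided $u$ has few neighbours already in $T$, most of its neighbours still lie outside, so after attaching $u$ the new leaf $u$ can itself be expanded with several outside neighbours, and the gain of that second expansion compensates the loss of the first. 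Making this precise requires a case analysis on how the stalled leaves and their outside neighbours overlap---several leaves may share a single outside neighbour, or outside neighbours may be mutually adjacent and must be rebuilt into a short path---and it is exactly the threshold between $\delta\ge 4$ and $\delta\ge 5$ in this local count that yields the two different ratios $2/5$ and $1/2$.

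Finally I would dispose of the small cases directly: for $n$ below a fixed bound the inequalities are checked by hand, so the expansion argument only needs to be carried out for large $n$, where maintaining $\Phi\ge 8$ (resp. $\Phi\ge 4$) throughout the growth delivers the claimed bound the moment $T$ becomes spanning.
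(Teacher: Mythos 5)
First, note that the paper does not prove this lemma at all: it is quoted from Griggs and Wu \cite{GW}, so there is no internal proof to compare against. Judged on its own terms, your proposal correctly identifies the right machinery --- the Kleitman--West expansion process with the potentials $3|L|-2|I|$ and $|L|-|I|$, whose target values $8$ and $4$ are indeed equivalent to the two stated bounds via $n=|L|+|I|$ --- and your accounting of how each move changes the potential is accurate ($\Delta\Phi=3m-5$, respectively $\Delta\Phi=m-2$, for an expansion with $m$ outside neighbours, so $m=1$ moves are the only harmful ones).

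However, there is a genuine gap, and it sits exactly where the whole theorem lives: the stalled configurations in which every available expansion has $m=1$ and hence decreases $\Phi$. Your resolution (``after attaching $u$, the new leaf $u$ can itself be expanded with several outside neighbours, and the gain compensates the loss'') is guarded by the proviso ``provided $u$ has few neighbours already in $T$'', but that proviso can fail: in a stalled configuration $u$ meets $T$ only in leaves, yet it may be adjacent to $\delta-1$ leaves of $T$ and to only one outside vertex, so the follow-up expansion is again an $m=1$ move and the potential keeps falling; such cascades can in principle be deferred for a long time. Ruling them out is precisely the content of the Griggs--Wu argument: it requires a charging scheme over dead leaves and a substantial structural case analysis, which is why their paper devotes separate, lengthy arguments to $\delta=4$ and $\delta=5$ and why the ratio degrades from $1/2$ to $2/5$. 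As written, your text names this difficulty and defers it, so it is a proof plan rather than a proof. Since the lemma is used here as a black box from the literature, the honest options are to cite \cite{GW} as the paper does, or to carry out that endgame analysis in full.
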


\begin{lem}\label{lem17}
Let $G$ be a connected graph of order $n$ and let $C=c_1,c_2,\ldots,c_k,c_1$ be a longest cycle in $G$. The subscripts of the vertices $c_t$ are taken modulo $k$.\\
(1) The vertices $c_i$ and $c_{i+1}$ have no common neighbor in $V(G)\setminus V(C)$.\\
(2) Let $x,y\in V(G)\setminus V(C)$. If $c_i,c_j\in N_C(x)$, then $c_{i+1}$ and $c_{j+1}$ cannot both belong to $N(y)$.\\
(3) Let $P_C=p_1,p_2,\ldots,p_s$ be a longest path in $G-V(C)$. If the vertices $p_1$ and $p_s$ have distinct neighbors in $V(C)$, then $s\leq \lfloor\frac{k}{2}\rfloor-1$.
\end{lem}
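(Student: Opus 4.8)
All three parts are classical ``longest cycle'' exchange arguments, and the common plan is the same: assume the conclusion fails, splice arcs of $C$ together with the external vertices, and produce either a cycle longer than $C$ (contradicting maximality) or a configuration excluded by an earlier part.

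\emph{Part (1).} The plan is a direct insertion. If some $v\in V(G)\setminus V(C)$ were adjacent to both $c_i$ and $c_{i+1}$, then replacing the edge $c_ic_{i+1}$ by the path $c_i,v,c_{i+1}$ produces the cycle $c_1,\ldots,c_i,v,c_{i+1},\ldots,c_k,c_1$ of length $k+1$, contradicting that $C$ is a longest cycle. This part is then available to deflect the degenerate cases of the other two.

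\emph{Part (2).} I would argue by contradiction with a crossing (rotation) argument. Suppose $c_i,c_j\in N(x)$ and $c_{i+1},c_{j+1}\in N(y)$. If $x=y$ then $x$ is a common neighbour of the consecutive vertices $c_i,c_{i+1}$, contradicting part (1); so I may assume $x\neq y$. The idea is to traverse the two arcs of $C$ cut out by $\{c_i,c_{i+1}\}$ and $\{c_j,c_{j+1}\}$ in opposite directions, inserting $x$ and $y$ at the two crossings, i.e. to consider
$$x,\,c_i,\,c_{i-1},\,\ldots,\,c_{j+1},\,y,\,c_{i+1},\,c_{i+2},\,\ldots,\,c_j,\,x.$$
The plan is to check that consecutive pairs are adjacent (the arc steps are edges of $C$, and the remaining edges $xc_i,xc_j,yc_{i+1},yc_{j+1}$ exist by hypothesis) and that all $k+2$ vertices are distinct, giving a cycle of length $k+2$ and the desired contradiction. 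The degenerate configurations, where an arc collapses to a single vertex, namely $c_{j+1}=c_i$ or $c_j=c_{i+1}$, force $y$ (respectively $x$) to be a common neighbour of two consecutive cycle vertices and are therefore already excluded by part (1).

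\emph{Part (3).} Here the plan is to use $P_C$ as an ``ear''. Pick $c_a\in N_C(p_1)$ and $c_b\in N_C(p_s)$ with $c_a\neq c_b$, and set $Q:=c_a,p_1,p_2,\ldots,p_s,c_b$, a path with $s+1$ edges whose interior avoids $C$. The vertices $c_a,c_b$ split $C$ into two arcs of edge-lengths $\ell_1,\ell_2$ with $\ell_1+\ell_2=k$. Joining $Q$ with either arc yields a cycle of length $s+1+\ell_1$ or $s+1+\ell_2$, and by the maximality of $C$ both are at most $k$; adding the two inequalities gives $2(s+1)+k\leq 2k$, so $s+1\leq k/2$, and since $s$ is an integer, $s\leq\lfloor k/2\rfloor-1$.

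\emph{Main obstacle.} The only genuine care is the index bookkeeping in part (2): one must confirm that the spliced walk is a \emph{simple} cycle in every cyclic configuration and correctly isolate the collapsed-arc cases that are handed back to part (1). The insertion in part (1) and the two-arc counting in part (3) are routine.
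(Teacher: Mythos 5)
Your proposal is correct: the insertion argument for (1), the crossing/rotation cycle $x,c_i,c_{i-1},\ldots,c_{j+1},y,c_{i+1},\ldots,c_j,x$ of length $k+2$ for (2) (with the collapsed-arc and $x=y$ cases correctly deflected to (1)), and the two-arc averaging $2(s+1)+k\leq 2k$ for (3) are all sound and are exactly the standard arguments the paper has in mind. The paper itself gives no proof here (it states only that the results are easy and omits them), so your write-up in fact supplies what the paper leaves out.
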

\begin{proof}
It is easy to show that the results of Lemma \ref{lem17}, so we omit them.
\end{proof}
Finally, we show that the proof of Theorem \ref{th4}.
\begin{proof}
Let $G$ be a connected graph of order $n$. For $n\leq 3\delta(G)$, by Theorem \ref{th11}, $c(G)\geq n-1$. For $n\geq 3\delta(G)+1$, by Lemma \ref{lem5}, we have $\delta(G)\leq 5$. Clearly, $\delta(G)\neq 1$. By Lemma \ref{lem9}, the result is true when $\delta(G)= 2$. Denote by $\delta(G)=\delta$. Now, it suffices to consider the case of $3\delta+1\leq n\leq 2\delta+6$ and $3\leq\delta\leq 5$.

{\it Case 1.} $\delta=3$

Note that $10\leq n\leq 12$. Since $L(G)\leq 2\delta-1=5$, we have $3\leq\Delta(G)\leq 5$. If $\Delta(G)=3$, by Theorem \ref{th13}, $G$ is hamiltonian and hence $c(G)\geq n-1$. If $\Delta(G)=5$, by Lemma \ref{lem14}, $n\leq 6+2=8<10$, a contradiction. Next suppose that $\Delta(G)=4$. We discuss it in two Subcases according to the order of $G$.

{\it Subcase 1.1.} Consider $n=10$. Let $C=c_1,c_2,\ldots,c_k,c_1$ be a longest cycle in $G$ and let $P_C$ be a longest path in $G-V(C)$. By Lemmas \ref{lem6} and \ref{lem15}, $k\geq 6$. Now we show that $k\geq 9$. Suppose to the contrary that $6\leq k\leq 8$.

For $k=6$, by Lemma \ref{lem17} (3), $|V(P_C)|\leq 2$. Recall that $\delta=3$ and $\Delta(G)=4$, by Lemma \ref{lem17} (1) and (2), we obtain at most two isolated vertices in $G-V(C)$. Hence $|V(P_C)|=2$ and $G[V(G)\setminus V(C)]=2K_1+K_2$ or $2K_2$. Let $x,y\in V(G)\setminus V(C)$ and $x$ is adjacent to $y$. Since $k=6$, we have $d(x)=d(y)=3$ and $N(x)\setminus \{y\}=N(y)\setminus \{x\}$. Clearly, $N(x)\setminus \{y\}=\{c_1,c_4\}$ or $\{c_2,c_5\}$ or $\{c_3,c_6\}$. It is not difficult to see that the proof methods for the above three cases are similar. So let us just consider the first case. Note that $|V(G)\setminus V(C)|=4$ and $\Delta(G)=4$. There is one vertex $z\in V(G)\setminus V(C)$ is adjacent to at least one of $\{c_2,c_3,c_5,c_6\}$. Suppose $z$ is adjacent to $c_2$. The subgraph induced by $\{c_6,c_1,x,y,c_2,c_3,z\}$ contains a tree with leaf number 5. By Lemma \ref{lem14}, $n\leq 7+2=9$, a contradiction. The remaining cases can be proved in the same way.

For $k=7$, by Lemma \ref{lem17} (3), $|V(P_C)|\leq 2$. If $|V(P_C)|= 2$, then $G[V(G)\setminus V(C)]=K_1+K_2$. Let $x,y,z\in V(G)\setminus V(C)$ and $x$ is adjacent to $y$. It is easy to check that $d(x)=d(y)=3$ and $N(x)\setminus \{y\}=N(y)\setminus \{x\}$. Without loss of generality, suppose that $N(x)\setminus \{y\}=\{c_1,c_4\}$. Recall that $\delta=3$ and $\Delta(G)=4$. Then $z$ is adjacent to at least three vertices in $V(C)\setminus \{c_1,c_4\}$. If $z$ is adjacent to $c_2$, the subgraph induced by $\{c_6,c_1,x,y,c_2,c_3,z\}$ contains a tree with leaf number 5. By Lemma \ref{lem14}, $n\leq 7+2=9$, a contradiction. Using a similar argument as above, we deduce that $z$ is not adjacent to $c_3$, $c_5$ and $c_6$, contradicting $d(z)\geq 3$. Next suppose $|V(P_C)|= 1$. Then $G[V(G)\setminus V(C)]=3K_1$. Let $x,y,z\in V(G)\setminus V(C)$. By Lemma \ref{lem17} (1) and (2), $d(x)=d(y)=d(z)=3$ and $N(x)=N(y)=N(z)$. Then there is one vertex of $N(x)$ has degree at least 5, a contradiction.

For $k=8$, $|V(P_C)|\leq 2$ since $n=10$. If $|V(P_C)|= 2$, then $G[V(G)\setminus V(C)]=K_2$. Let $x,y\in V(G)\setminus V(C)$. Without loss of generality, suppose $x$ is adjacent to $c_1$. Since $C$ is a longest cycle in $G$, then $y$ is adjacent to $c_4$, $c_5$ or $c_6$. Obviously, we can split this into two cases. The first case is where $y$ is adjacent to $c_4$. One can easily show that $d(x)=d(y)=3$ and $N(x)\setminus\{y\}=N(y)\setminus\{x\}=\{c_1,c_4\}$. Consider the vertex $c_2$. If $c_2$ is not adjacent to $c_8$, the subgraph induced by $N(c_1)\cup N(c_2)$ contains a tree with leaf number 5 since $d(c_2)\geq 3$. By Lemma \ref{lem14}, $n\leq 7+2=9<10$, a contradiction. If $c_2$ is adjacent to $c_8$, $G$ contains a cycle $c_2,c_8,c_7,c_6,c_5,c_4,y,x,c_1,c_2$ with length 9 (see Fig. 3), contradicting to $k=8$. The second case is where $y$ is adjacent to $c_5$. Similarly, we have $d(x)=d(y)=3$ and $N(x)\setminus\{y\}=N(y)\setminus\{x\}=\{c_1,c_5\}$. The following results which are derived from the above proof: $c_2$ is adjacent to $c_8$ and $c_4$ is adjacent to $c_6$. Then $G$ contains a cycle $c_2,c_8,c_7,c_6,c_4,c_5,y,x,c_1,c_2$ with length 9, a contradiction.
\begin{figure}[h]
  \centering
  \subfigure
  {
      \begin{minipage}[b]{.4\linewidth}
      \centering
      \includegraphics[scale=0.6]{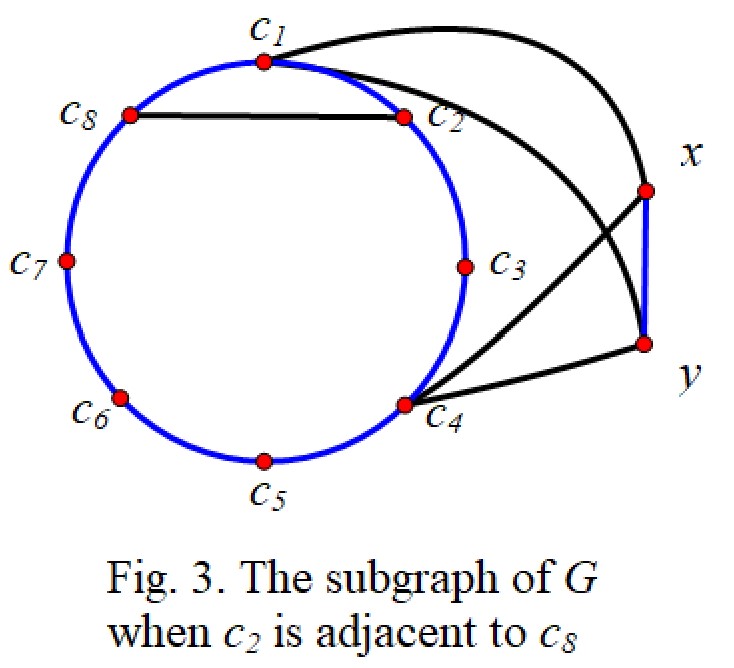}
      \end{minipage}
  }
  \subfigure
  {
      \begin{minipage}[b]{.4\linewidth}
      \centering
      \includegraphics[scale=0.6]{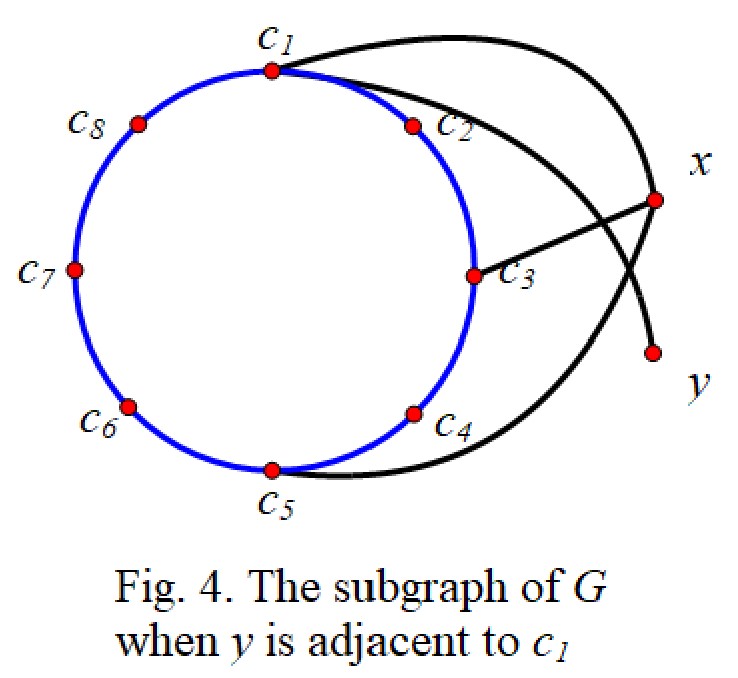}
      \end{minipage}
  }
\end{figure}
Next suppose $|V(P_C)|= 1$ and hence $G[V(G)\setminus V(C)]=2K_1$. Let $x,y\in V(G)\setminus V(C)$. Since $C$ is a longest cycle in $G$ and $L(G)\leq 5$, then $d(x)=d(y)=3$. Without loss of generality, suppose that $N(x)=\{c_1,c_3,c_5\}$ or $\{c_1,c_3,c_6\}$. For the first case, assert that $N(y)\cap N(x)=\emptyset$. Otherwise, $G$ contains a tree with leaf number at least 6 if $y$ is adjacent to $c_1$ or $c_5$ (see Fig.4), a contradiction. And if $y$ is adjacent to $c_3$, the subgraph induced by the vertex set $\{c_1,c_2,c_3,c_4,c_5,x,y\}$ contains a tree with leaf number 5. Then, by Lemma \ref{lem14}, $n\leq 9$, a contradiction. So, $N(y)\subseteq V(C)\setminus N(x)$ and $|N(y)|=3$, by Lemma \ref{lem17} (1) and (2), which is not allowed. For the second case, the proof method is similar to the first case, and will not be repeated here.

{\it Subcase 1.2.} Consider $n=11$ or $12$. Let $x\in V(G)$ with $d(x)=4$. Set $N(x)=\{x_1,x_2,x_3,x_4\}$. Assert that any vertex of $N(x)$ has at most one neighbor in $V(G)\setminus N[x]$. Since $L(G)\leq 5$, we have $d_{G- N[x]}(x_i)\leq 2$ for $i\in \{1,2,3,4\}$. If there is one vertex of $N(x)$ has exactly two neighbors in $V(G)\setminus N[x]$, by Lemma \ref{lem14}, $n\leq 5+2+2=9<11$, a contradiction. Hence, $e(N(x),V(G)\setminus N[x])\leq 4$. Let $N_2(x)\subseteq V(G)\setminus N[x]$ and each vertex of $N_2(x)$ has neighbor in $N(x)$. Similarly, by Lemma \ref{lem14}, we can show that each vertex of $N_2(x)$ has at most one neighbour in $V(G)\setminus N[x]$. Then each vertex of $N_2(x)$ has at least two neighbours in $N(x)$, since $\delta=3$. By Lemma \ref{lem6}, $G$ is 2-connected and hence $|N_2(x)|\geq 2$. Then $|N_2(x)|= 2$ and $e(N(x),V(G)\setminus N[x])= 4$. Set $N_2(x)=\{y_1,y_2\}$. Without loss of generality, suppose that $N(y_1)\cap N(x)=\{x_1,x_2\}$ and $N(y_2)\cap N(x)=\{x_3,x_4\}$. It is easy to check that $y_1$ is not adjacent to $y_2$, since $n\geq 11$. Let $z_1=N(y_1)\setminus\{x_1,x_2\}$ and $z_2= N(y_2)\setminus\{x_3,x_4\}$. Since $G$ is 2-connected, then $z_1\neq z_2$. Note that $G[N(x)]$ contains $2K_2$. Then $G$ contains a path of length 8 with endpoints $z_1$ and $z_2$. For $n=11$, it remains two vertices $w_1$ and $w_2$. Obviously, $d(w_1)=d(w_2)=3$ and $N(w_1)=\{w_2,z_1,z_2\}$, $N(w_2)=\{w_1,z_1,z_2\}$. Thus $c(G)=n$. For $n=12$, it remains three vertices $w_1$, $w_2$ and $w_3$. One can easy show that $c(G)\geq n-1$. So Case 1 is proven.

{\it Case 2.} $\delta=4$

Note that $13\leq n\leq 14$. For $n=14$, by Lemma \ref{lem16} (1), $L(G)\geq \frac{2n+8}{5}=\frac{36}{5}>7$, contradicting $L(G)\leq 2\delta-1=7$. Then we only need to consider $n=13$. Suppose $n=13$. Since $L(G)\leq 7$, we have $4 \leq \Delta(G) \leq 7$. By Lemma \ref{lem14}, $\Delta(G)\neq 7$. For $\Delta(G)=6$, let $x\in V(G)$ with $d(x)=6$. Then any vertex of $N(x)$ has at most two neighbors in $V(G)\setminus N[x]$. Let $N_2(x)\subseteq V(G)\setminus N[x]$ and each vertex of $N_2(x)$ has neighbor in $N(x)$. By Lemma \ref{lem14}, any vertex of $N(x)\cup N_2(x)$ has at most one neighbor in $V(G)\setminus N[x]$. This implies that $e(N(x),N_2(x))\leq 6$. Since $\delta=4$, then $|N_2(x)|\leq 2$. By Lemma \ref{lem6}, $G$ is 2-connected and hence $|N_2(x)|=2$. Clearly, $y_1$ is not adjacent to $y_2$ since $n=13$. Let $z_1\in N(y_1)$ and $z_2\in N(y_2)$. Recall that $G$ is 2-connected. $z_1\neq z_2$. Hence $|N(z_1)\setminus \{y_1\}|\geq 3$. From Fig.5, we obtain $G$ contains a tree with leaf number 8, a contradiction. For $\Delta(G)=4$, by Theorem \ref{th13}, $c(G)=n\geq n-1$.
\begin{figure}[h]
  \centering
      \includegraphics[scale=0.6]{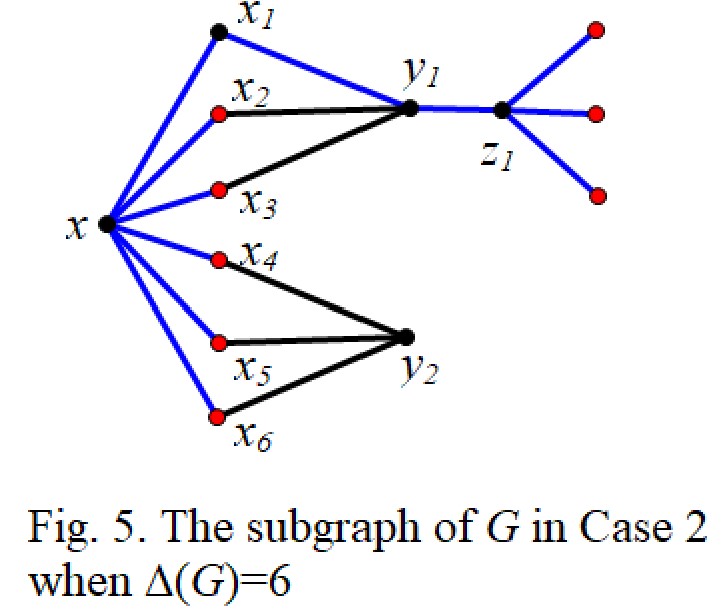}
\end{figure}

It remains the case of $\Delta(G)=5$. Let $C=c_1,c_2,\ldots,c_k,c_1$ be a longest cycle in $G$ and let $P_C$ be a longest path in $G-V(C)$. By Lemmas \ref{lem6} and \ref{lem15}, $k\geq 8$. Now we show that $k\geq 12$. Suppose to the contrary that $8\leq k\leq 11$.

For $k=8$, by Lemma \ref{lem17} (3), $|V(P_C)|\leq 3$. Since $\Delta(G)=5$, by Lemma \ref{lem17} (1) and (2), there are at most three isolated vertices in $G-V(C)$. Then $2\leq |V(P_C)|\leq 3$. If $|V(P_C)|=2$, then $G-V(C)=3K_1+K_2$ or $K_1+2K_2$. Let $x,y\in V(G)\setminus V(C)$ and $x$ is adjacent to $y$. Without loss of generality, suppose that $c_1\in N(x)$. By Lemma \ref{lem17} (1) and (2), $N(y)\subseteq\{x,c_1,c_4,c_5,c_6\}$. Then $N(y)=\{x,c_1,c_4,c_6\}$, since $C$ is a longest cycle and $\delta=4$. Further, we have $N(x)=\{c_1,y\}$, contradicting $d(x)\geq 4$. Next suppose $|V(P_C)|=3$. Let $P_C=x,y,z$. It follows that $N_C(x)=N_C(z)$ and $|N_C(x)|=|N_C(z)|=2$. Recall that $C$ is a longest cycle in $G$. Without loss of generality, suppose that $N_C(x)=N_C(z)=\{c_1,c_5\}$. Since $\delta(G)=4$ and $|V(P_C)|=3$, we have $x$ is adjacent to $z$. Note that we have a new path $P'_C=y,x,z$ in $G-V(C)$. Similarly, $N_C(y)=N_C(z)$ and $|N_C(y)|=|N_C(z)|=2$. So, $N_C(x)=N_C(y)=N_C(z)=\{c_1,c_5\}$ and $d(x)=d(y)=d(z)=4$. Then $G[V(G)\setminus V(C)]=2K_1+K_3$. Let $\{u,v\}=V(G)\setminus (V(C)\cup\{x,y,z\})$. Clearly, $N(u)=N(v)=\{c_2,c_4,c_6,c_8\}$. The subgraph induced by $\{c_8,c_1,c_2,c_3,x,y,z,u,v\}$ contains a tree with leaf number 7 (see Fig.6). By Lemma \ref{lem14}, $n\leq 9+2=11<13$, a contradiction.
\begin{figure}[h]
  \centering
      \includegraphics[scale=0.6]{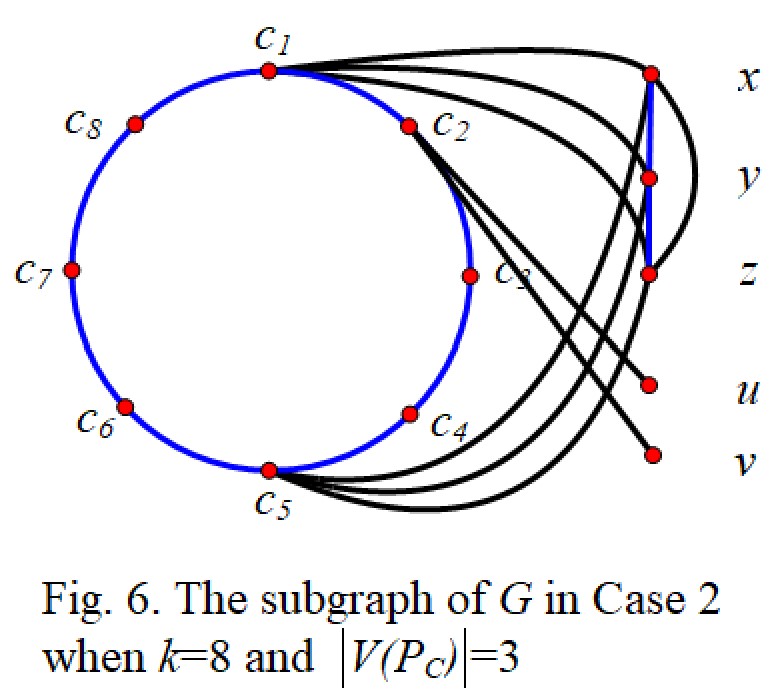}
\end{figure}

For $k=9$, by Lemma \ref{lem17}, $|V(P_C)|\leq 3$ and there are at most two isolated vertices in $G-V(C)$. Then $2\leq |V(P_C)|\leq 3$. If $|V(P_C)|=2$, then $G[V(G)\setminus V(C)]=2K_1+K_2$ or $2K_2$. Let $x,y\in V(G)\setminus V(C)$ and $x$ is adjacent to $y$. One can easy show that $d(x)=d(y)=4$ and $N(x)\cap N(y)=\{c_1,c_4,c_7\}$ or $\{c_2,c_5,c_8\}$ or $\{c_3,c_6,c_9\}$. Without loss of generality, suppose that $N(x)\cap N(y)=\{c_1,c_4,c_7\}$. Since $L(G)\leq 7$, any vertex of $V(G)\setminus (V(C)\cup \{x,y\})$ has no neighbor in $\{c_1,c_4,c_7\}$. Further, by Lemma \ref{lem17} (1) and (2), $G[V(G)\setminus V(C)]\neq 2K_1+K_2$ and $G[V(G)\setminus V(C)]\neq 2K_2$, since $\delta=4$. Next suppose $|V(P_C)|=3$. Let $P_C=x,y,z$. Using the same method as the case of $k=8$ and $|V(P_C)|=3$, we obtain $G[V(G)\setminus V(C)]=K_1+K_3$ and $N_C(x)=N_C(y)=N_C(z)$. Without loss of generality, suppose that$N_C(x)=N_C(y)=N_C(z)=\{c_1,c_5\}$. Consider the vertex $c_2$. If $c_2$ is adjacent to $c_9$, $G$ contains a cycle $c_1,c_2,c_9,c_8,c_7,c_6,c_5,z,y,x,c_1$ with length 10, a contradiction. If $c_2$ is not adjacent to $c_9$, the subgraph induced by $N(c_1)\cup N(c_2)$ contains a tree with leaf number 7. By Lemma \ref{lem14}, $n\leq 10+2=12<13$, a contradiction.

For $k=10$, $|V(P_C)|\leq 3$. For $|V(P_C)|= 3$, let $V(G)\setminus V(C)=\{x,y,z\}$. Similarly, one can easy show that $G[V(G)\setminus V(C)]=K_3$ and $N_C(x)=N_C(y)=N_C(z)$ and $|N_C(x)|=|N_C(y)|=|N_C(z)|=2$. Without loss of generality, suppose that $N_C(x)=\{c_1,c_5\}$ or $\{c_1,c_6\}$. If $N_C(x)=\{c_1,c_5\}$, we consider the vertex $c_3$. Note that $d(c_1)=d(c_5)=5$. Then $|N(c_3)\cap (V(C)\setminus \{c_1,c_2,c_4,c_5\})|\geq 2$, since $\delta=4$. If $c_3$ is adjacent to $c_6$ (see Fig.7a) or $c_{10}$, then $G$ contains a cycle $c_5,c_4,c_3,c_6,c_7,c_8,c_9,c_{10},c_1,x,y,z,c_5$ with length 12 or a cycle $c_1,c_2,c_3,c_{10},c_9,c_8,c_7,c_6,c_5,z,y,x,c_1$ with length 12, a contradiction. If $c_3$ is adjacent to $c_7$ (see Fig.7b) or $c_9$, then $G$ contains a cycle $c_5,c_4,c_3,c_7,c_8,c_9,c_{10},c_1,x,y,z,c_5$ with length 11 or a cycle $c_1,c_2,c_3,c_9,c_8,c_7,c_6,c_5,z,y,x,c_1$ with length 11, a contradiction. Hence we have $N(c_3)\subseteq\{c_2,c_4,c_8\}$, contradicting $\delta=4$.
\begin{figure}[h]
  \centering
  \subfigure
  {
      \begin{minipage}[b]{.4\linewidth}
      \centering
      \includegraphics[scale=0.6]{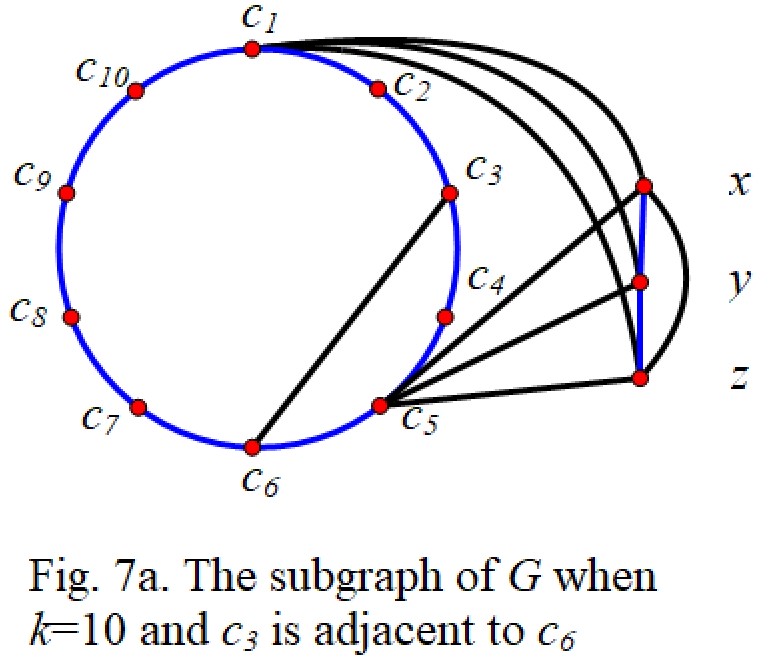}
      \end{minipage}
  }
  \subfigure
  {
      \begin{minipage}[b]{.4\linewidth}
      \centering
      \includegraphics[scale=0.6]{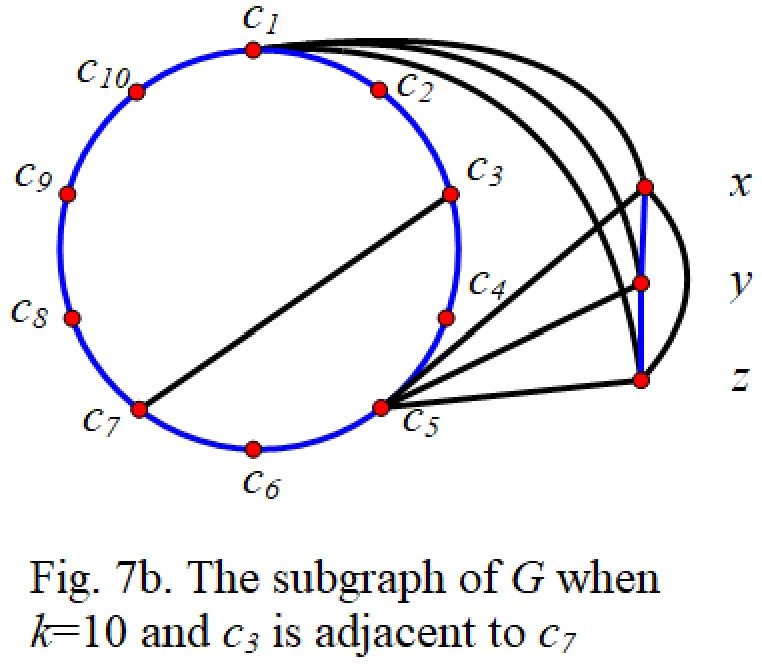}
      \end{minipage}
  }
\end{figure}
If $N_C(x)=\{c_1,c_6\}$, using the same method, we have $c_7,c_{10}\notin N(c_3)$ and $c_7,c_{10}\notin N(c_4)$. Now we show that $c_3$ and $c_4$ are both adjacent to exactly one of $\{c_8,c_9\}$. Suppose to the contrary that $c_3$ and $c_4$ are adjacent to $c_8$ and $c_9$ or $c_9$ and $c_8$, respectively. Then $G$ contains a cycle $c_1,c_2,c_3,c_8,c_9,c_4,c_5,c_6,z,y,x,c_1$ with length 11 or a cycle $c_1,c_2,c_3,c_9,c_8,c_4,c_5,c_6,z,y,x,c_1$ with length 11, contradicting $k=10$. Without loss of generality, suppose $c_9\in N(c_3)\cap N(c_4)$. Since $\delta=4$, $c_4$ is adjacent to $c_2$. Then $G$ contains a cycle $c_1,c_{10},c_9,c_3,c_2,c_4,c_5,c_6,z,y,x,c_1$ with length 11, a contradiction. For $|V(P_C)|\leq 2$, it implies that there exists at least one isolated vertex in $G-V(C)$. Assert that any isolated vertex of $G-V(C)$ has degree 4. Otherwise, suppose $d_{G-V(C)}(x)=0$ and $d(x)=5$. Then $N(x)=\{c_1,c_3,c_5,c_7,c_9\}$ or $\{c_2,c_4,c_6,c_8,c_{10}\}$. We show that the first case, the second can be proved by same method. Note the subgraph induced by $\{c_9,c_{10},c_1,c_2,c_3,c_4,c_5,c_6,c_7,x\}$ contains a tree with leaf number 7. By Lemma \ref{lem14}, $n\leq 10+2=12<13$, a contradiction. Let $V(G)\setminus V(C)=\{x,y,z\}$ and $d_{G-V(C)}(x)=0$. Then, by Lemma \ref{lem14}, $N(y)\cap N(x)=\emptyset$ and $N(z)\cap N(x)=\emptyset$. By Lemma \ref{lem17} (1) and (2), $|V(P_C)|>1$ since $\delta=4$. Hence $|V(P_C)|=2$ and $y$ is adjacent to $z$. Since $d(x)=4$, then the neighbors of $x$ divide $C$ into four parts. The lengths of the four parts of $C$ are 2,2,2,4 or 2,2,3,3 or 2,3,2,3 (see Fig.8). By Lemma \ref{lem17} (1) and (2), it is easy to show that in each case there is a contradiction, so we omit it.
\begin{figure}[h]
  \centering
      \includegraphics[scale=0.5]{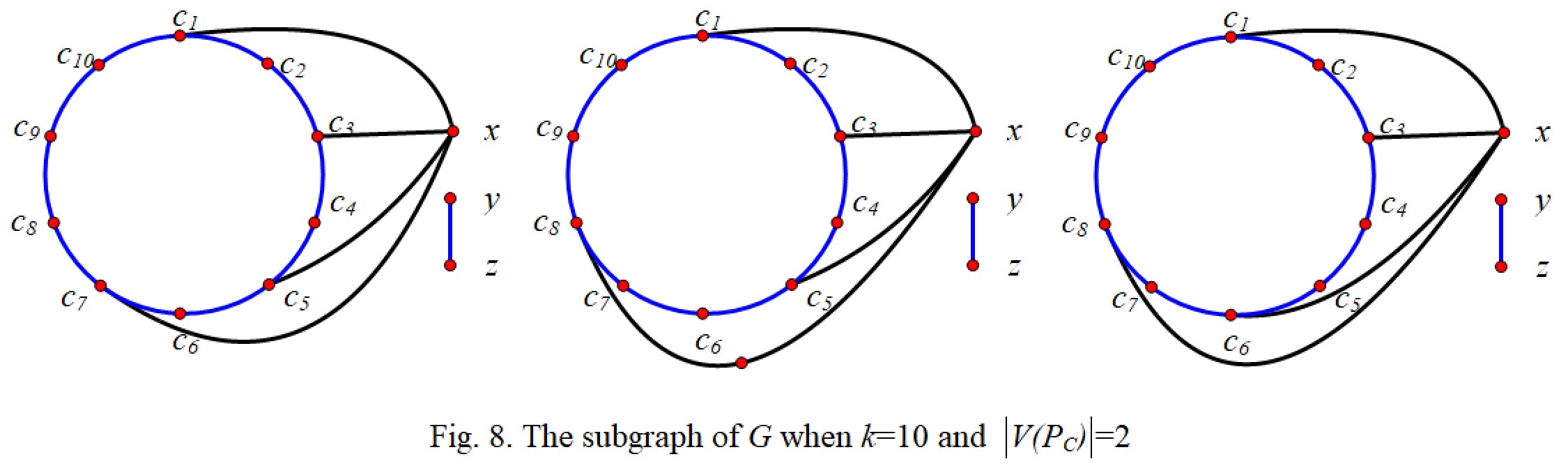}
\end{figure}

For $k=11$, $|V(P_C)|\leq 2$. Let $x,y\in V(G)\setminus V(P)$. Since $n=13$, by Lemma \ref{lem14}, $d(x)=d(y)=4$. For $|V(P_C)|=2$, $G[V(G)\setminus V(C)]=K_2$. Note that the neighbors of $x$ in $V(C)$ divide $C$ into three parts. Then we have five cases and the lengths of the three parts of $C$ are 2,2,7 or 2,3,6 or 2,4,5 or 3,3,5 or 3,4,4. The proof methods for the first three cases are similar. Since $C$ is a longest cycle in $G$, one can easy show that $d(y)\leq 3<4$, a contradiction. The proofs for the latter two cases are similar, so we only give the proof for one of them here. Without loss of generality, suppose $N(x)=\{c_1,c_4,c_7\}$. Consider the vertex $c_3$. We assert that $N(c_3)\cap N(x)=\{c_4\}$. Otherwise, if $c_3$ is adjacent to $c_1$, the subgraph induced by $\{c_{11},c_1,c_2,c_3,x,y,c_4,c_6,c_7,c_8\}$ contains a tree with leaf number 7 and hence by Lemma \ref{lem14}, $n\leq 12<13$, a contradiction. Similarly, we can show that $c_3$ is not adjacent to $c_7$. If $c_3$ is adjacent to $c_5$, $G$ contains a cycle $c_4,c_3,c_5,c_6,c_7,c_8,c_9,c_{10},c_{11},c_1,x,y,c_4$ of length 12, contradicting $k=11$. Hence $N(c_3)\cap \{x,y,c_1,c_5,c_7\}=\emptyset$. Note that the subgraph induced by $N[c_3]\cup \{x,y,c_1,c_5,c_7\}$ contains a tree with leaf number 7, since $d(c_3)\geq 4$. Then, by Lemma \ref{lem14}, $n\leq 12<13$, a contradiction. For $|V(P_C)|=1$, $G[V(G)\setminus V(C)]=2K_1$. Recall that $d(x)=d(y)=4$. We assert that $N(x)\cap N(y)=\emptyset$. Otherwise, by Lemma \ref{lem14}, we will have $n<13$. Note that the neighbors of $x$ in $V(C)$ divide $C$ into four parts and hence we have four cases. The lengths of each parts of $C$ are 2,2,2,5 or 2,2,3,4 or 2,3,2,4 or 2,3,3,3. Similarly, by Lemma \ref{lem17} (1) and (2), one can easy show that in each case there is a contradiction. Then Case 2 is proven.

{\it Case 3.} $\delta=5$

Note that $n=16$. By Lemma \ref{lem16} (2), $L(G)\geq \frac{n}{2}+2=10$, contradicting $L(G)\leq 2\delta-1$. So Case 3 is proven.

For the sharpness, consider the following graph. The graph $G_1$ of order $n$ is formed by taking the cycle $C_{n-1}=v_1,v_2,\ldots,v_{n-2},v_{n-1},v_1$ and add one vertex $v_{n}$ together with edges $v_1v_n, v_3v_n$. Note that $\delta(G_1)=2$ and $L(G_1)=3$. Then $G_1$ satisfying $L(G_1)\leq 2\delta(G_1)-1$ and $c(G_1)=n-1$.

The condition $L(G)\leq 2\delta(G)-1$ cannot be relaxed. The graph $G_2$ with order $n\geq 8$ is formed by taking the cycle $C_{n-2}=v_1,v_2,\ldots,v_{n-2},v_1$ and add two vertices $v_{n-1}$ and $v_n$ together with edges $v_1v_{n-1}, v_3v_{n-1}, v_{n-5}v_n, v_{n-3}v_n$. Clearly, $\delta(G_2)=2$ and $L(G_2)=4$. Then $G_2$ satisfying $L(G_2)\leq 2\delta(G_2)$ but $c(G_2)=n-2$. This completes the proof of Theorem \ref{th4}.
\end{proof}

{\bf Acknowledgement} This research was supported Science and Technology Commission of Shanghai Municipality (STCSM) grant 18dz2271000.

\end{document}